\documentclass[11pt,a4paper]{amsart}
\usepackage[english]{babel}
\usepackage{lmodern}
\usepackage{newlfont}
\usepackage{booktabs}
\usepackage{color}
\usepackage[T1]{fontenc}
\usepackage[utf8]{inputenc}
\usepackage{indentfirst}
\usepackage{amsmath}
\usepackage{amsmath,amssymb}
\usepackage{amsthm}
\usepackage{geometry}
\geometry{a4paper,top=3cm,bottom=3cm,left=3.5cm,right=3.5cm,heightrounded,bindingoffset=5mm}
\usepackage{mathtools}
\usepackage{listings}
\usepackage{amsfonts}
\usepackage{braket}
\usepackage{emptypage}
\usepackage{newlfont}
\usepackage{amssymb}
\usepackage{graphicx}
\usepackage[italian]{varioref}
\usepackage{accents}
\usepackage{comment}
\usepackage{stmaryrd}
\usepackage{pgfplots}

\makeatletter
\labelformat{equation}{\tagform@{#1}}
\makeatother
\usepackage{hyperref}
\usepackage{relsize}
\usepackage{faktor}
\usepackage{enumerate}
\usepackage{fancyhdr}

\usepackage[colorinlistoftodos]{todonotes}

\DeclarePairedDelimiter{\abs}{\lvert}{\rvert}

\DeclarePairedDelimiter{\scal}{\langle}{\rangle}

\DeclareMathOperator{\hess}{Hess}

\renewcommand{\phi}{\varphi}
\renewcommand{\epsilon}{\varepsilon}

\newcommand{\numberset}{\mathbb}

\newcommand{\N}{\numberset{N}}

\newcommand{\R}{\numberset{R}}

\newcommand{\norm}[1]{\left\|#1\right\|}
\newcommand{\wto}{\rightharpoonup}
\newcommand{\embed}{\hookrightarrow}

\newcommand{\curv}{\mathfrak{K}}
\newcommand{\tangvet}{\textsl{t}}
\newcommand{\orig}{\mathbf{0}}

\newcommand{\la}{\lambda}
\newcommand{\Om}{\Omega}

\theoremstyle{definition}

\theoremstyle{definition}                                                                         
\newtheorem{definizione}{Definizione}[section]
\theoremstyle{definition}                                                                         
\newtheorem{rmk}[definizione]{Remark}
\theoremstyle{plain}                                                                              
\newtheorem{thm}[definizione]{Theorem}
\theoremstyle{plain}    
\newtheorem{prop}[definizione]{Proposition}
\theoremstyle{plain}     
\newtheorem{lemma}[definizione]{Lemma}
\theoremstyle{plain}
\newtheorem{cor}[definizione]{Corollary}
\theoremstyle{definition}

\pagestyle{fancy}
\fancyhf{}
\fancyhead[LE, RO]{\thepage}

\begin{document}
\title[]{On the number of critical points of the second eigenfunction of the laplacian in convex planar domains}
\thanks{This work was supported by INDAM-GNAMPA}

\author[De Regibus]{Fabio De Regibus}
\address{Dipartimento di Matematica, Universit\`a di Roma ``La Sapienza'', P.le A. Moro 2 - 00185 Roma, Italy, e-mail: {\sf fabio.deregibus@uniroma1.it}.}
\author[Grossi]{Massimo Grossi }
\address{Dipartimento di Matematica, Universit\`a di Roma ``La Sapienza'', P.le A. Moro 2 - 00185 Roma, Italy, e-mail: {\sf massimo.grossi@uniroma1.it}.}

\begin{abstract}
In this paper we consider the second eigenfunction of the Laplacian with Dirichlet boundary conditions in convex domains. If the domain has {\em large eccentricity} then the eigenfunction has {\em exactly} two nondegenerate critical points (of course they are one maximum and one minimum). The proof uses some estimates proved by Jerison (\cite{j}) and Grieser-Jerison (\cite{gj2}) jointly with a topological degree argument. Analogous results for higher order eigenfunctions are proved in rectangular-like domains considered in~\cite{gj4}.
\end{abstract}

\maketitle

\section{Introduction and main results}
Let $\Omega\subset\R^N$, $N\ge2$, be a bounded and smooth domain. Assume that $u$ is a classical solution of the following problem
\begin{equation}
\label{PB}
\begin{cases}
-\Delta u= f(u)&\text{in }\Omega\\
u=0&\text{on }\partial\Omega,
\end{cases}
\end{equation}
where $f:[0,+\infty)\to[0,+\infty)$ is a smooth function.

It is known that the shape of the solution $u$ is strongly influenced by the geometry of the domain $\Omega$ and by the nonlinearity $f$. In particular a classical problem concerns the study of the number of critical points of solutions of problem~\ref{PB}.

If $u$ is a positive solution a lot of results can be found in literature. We are going to recall some of them. The uniqueness of the critical point can be recovered in any dimension and for any locally Lipschitz nonlinearity under symmetry assumptions: this is a consequence of the celebrated results~\cite{gnn} if we ask $\Omega$ to be convex and symmetric with respect to all directions.

Under the only convexity assumption of the domain $\Omega$, the uniqueness of the critical point can be proved only in special cases. If we consider the torsion problem, i.e. $f\equiv1$, Makar-Limanov~\cite{ml} proved uniqueness and nondegeneracy of the critical point when $N=2$. Moreover he showed that $u$ is quasiconcave, that is all the superlevel sets are convex. Then, the same result has been obtained in the case of the first Dirichlet eigenfunction in any dimension, namely $f(u)=\lambda_1u$, see~\cite{bl3,app}. 

In dimension $N=2$, without any symmetry assumption Cabré and Chanillo in~\cite{cc} proved that $u$ possesses exactly one nondegenerate maximum point, provided that the curvature of the boundary of $\Omega$ is strictly positive and $u$ is semi-stable i.e. if for all $\phi\in\mathcal C^{\infty}_{0}(\Omega)$ it holds
\[
\int_{\Omega}|\nabla\phi|^{2}-\int_{\Omega} f'(u)|\phi|^{2}\ge0.
\]
The result has been extended to domain with nonnegative curvature in~\cite{dgm}.

We point out that the convexity assumption can not be dropped, indeed for $N=2$, for any $k\in\N$ it is possible to find a smooth "almost convex" domain $\Omega$ such that the solution of the torsion problem has at least $k$ critical point, see~\cite{ggARXIV} (See also~\cite{drg} for a generalization). 

In this paper we are interested in the study of the number of critical points in the case of sign-changing solutions. To our knowledge there are no results in the literature. So our starting point is the classical problem of the second Dirichlet eigenfunction of the Laplacian in dimension $N=2$, that is we consider the following eigenvalue problem
\begin{equation}
\label{i0}
\begin{cases}
-\Delta u=\lambda_2 u&\text{ in }\Omega\subset\R^2\\
u=0&\text{ on }\partial\Omega,
\end{cases}
\end{equation}
where $\la_2$ is the second eigenvalue of the Laplace operator and $u$ a corresponding eigenfunction. It is known that $u$ must change sign and the geometry and location of its nodal line $\Lambda=\overline{\left\{(x,y)\in\Om:u(x,y)=0\right\}}$ has addressed a lot of interest. A longstanding conjecture is the following
\vskip0.2cm
\hypertarget{(C)}{(C)}\,\,{\em For which domains $\Omega\subset\R^2$ does the nodal line $\Lambda$ touch $\partial\Om$ at exactly two points?}
\vskip0.2cm
In~\cite{pa2} it was conjectured that it happens for any bounded domain and in~\cite{m} it was proved in convex domains, as conjectured in~\cite{ya} (for other works about this conjecture, see for instance~\cite{l2, pa, a,da}). The conjecture is not true in any domain: in~\cite{hhn} it was given an example of a domain with a lot of holes where the nodal line of the second eigenfunction does not touch the boundary. In the same paper it was conjectured that~\hyperlink{(C)}{(C)} holds in planar simply-connected domains.

Of course the computation of the critical points of eigenfunctions to~\ref{i0} is strongly influenced by the geometry of the nodal lines. If it is a closed curve contained in $\Om$  we expect at least $3$ critical points, otherwise $2$ is the minimum number. For this reason we restrict our interest to the case of {\em convex} domains but, even in this case, there are not sufficient qualitative information on the eigenfunction. So we have to consider a suitable subset of convex domains, namely those with {\em large eccentricity}. Let us recall that the eccentricity of a planar domain is defined as
\[
\hbox{ecc}(\Om)=\frac{\hbox{diameter }\Om}{\hbox{inradius }\Om}
\]
where inradius $\Om$ is the radius of the largest circle contained in $\Om$. These domains were considered by Jerison (\cite{j}) and Grieser-Jerison (\cite{gj2}) where the location of the nodal line $\Lambda$ was characterized. In order to state their result we need to normalize the domain $\Om$ in an appropriate way. First let us rotate $\Om$ so that
its projection on the $y$-axis has the shortest possible length, and then dilate
so that this projection has length $1$. Denote by $N$ the length of the projection of $\Om$
on the $x$-axis. Then $N \ge1$, and $N$ is essentially the diameter of $\Om$. From now we denote by $\Om_N$ a domain satisfying the previous properties and accordingly by  $u_N$ a solution to~\ref{i0} in $\Om=\Om_N$ with $\Lambda_N$ its nodal line.

Note that in this setting the domain $\Om_N$ is close to the strip (in a suitable way) $\Om_\infty=\set{(x,y)\in\R^2:0<y<1}$.
We have the following result.
\begin{thm}[{\cite[Theorem 1]{gj2}}]
\label{nodalline}
There is an absolute constat $C_0$ such that the width of the nodal line $\Lambda_N$ is at most $C_0/N$. In other words, up to translate $\Omega_N$, one has
\[
(x,y)\in\Lambda_N\implies \abs{x}<\frac{C_0}{N}.
\]
\end{thm}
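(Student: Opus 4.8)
The plan is to reduce the two-dimensional eigenvalue problem~\ref{i0} to an effective one-dimensional Schrödinger equation along the long axis of $\Om_N$, and then to locate the single node of the resulting one-dimensional eigenfunction. Denote the vertical slice of $\Om_N$ over $x$ by the interval $(\alpha(x),\beta(x))$ and write $h(x)=\beta(x)-\alpha(x)$ for its length. Since $\Om_N$ is convex, $\beta$ is concave and $\alpha$ convex, so $h$ is concave; consequently the effective potential
\[
V(x)=\frac{\pi^2}{h(x)^2},
\]
which is exactly the first Dirichlet eigenvalue of the slice at $x$, is a single well blowing up at the two ends where $h\to0$. First I would introduce the adiabatic ansatz $u_N(x,y)\approx\psi(x)\,\phi_x(y)$, where $\phi_x>0$ is the $L^2$-normalized ground state of the slice at $x$, and show that $\la_2$ agrees, up to a controlled error, with the second eigenvalue $\mu_2$ of $L=-\partial_x^2+V(x)$, with $\psi$ close to the associated eigenfunction $\psi_2$.

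The second step is the one-dimensional node analysis. By Sturm oscillation theory $\psi_2$ has exactly one sign change, and a comparison argument pins this node near the bottom of the well $V$, that is near the widest slice of $\Om_N$; translating $\Om_N$ in $x$ I may then assume the node sits at $x=0$. Convexity, forcing $h$ to be concave with a well-defined maximum, should also yield a quantitative lower bound on the transversality of $\psi_2$ at its node, i.e. on $\abs{\psi_2'(0)}$ relative to the size of $\psi_2$, which is what keeps the node from smearing out.

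The crucial observation for the third step is that the leading product $\psi_2(x)\,\phi_x(y)$ vanishes exactly on the vertical line $\{x=0\}$, because $\phi_x$ is strictly positive in the interior of each slice. Hence the entire horizontal spread of $\Lambda_N$ is produced by the remainder $E:=u_N-\psi_2\,\phi_x$. Writing $u_N=0$ and using $\psi_2(x)\approx\psi_2'(0)\,x$ near the node gives, in the bulk, $x\approx-E(x,y)/(\psi_2'(0)\,\phi_x(y))$, so that the width of $\Lambda_N$ is bounded by $\sup\abs{E}$ divided by $\abs{\psi_2'(0)}\,\inf_{\mathrm{bulk}}\phi_x$. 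The bound $\abs{x}<C_0/N$ is therefore equivalent to a sufficiently sharp estimate on $E$ near the node.

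The main obstacle is precisely this uniform, quantitative control of $E$. Separation of variables is not exact because $\phi_x$ depends on $x$, so $-\partial_x^2$ generates curvature terms involving $\partial_x\phi_x$ and $\partial_x^2\phi_x$; these are small because $h$ varies only over the long scale $N$, but to upgrade this smallness to the precise rate $1/N$ one must invert $L$ on the complement of $\psi_2$ and play the size of these source terms against the spectral gap of $L$, which is itself of order $1/N$ because of the flatness $V''\sim N^{-2}$ of the well. The estimate degenerates, however, at the two points where $\Lambda_N$ meets $\partial\Om_N$: there $h\to0$, the transverse ground state $\phi_x$ vanishes, and the denominator in the transfer estimate collapses. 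Controlling these two boundary regions, where $\Om_N$ is genuinely two-dimensional, requires separate barrier and maximum-principle arguments together with the a priori eigenfunction bounds of Jerison~\cite{j}, and this is where convexity and the large-eccentricity hypothesis enter most essentially.
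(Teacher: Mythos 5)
You should first note that the paper offers no proof of Theorem~\ref{nodalline}: it is imported verbatim from \cite[Theorem 1]{gj2}, and the only place where the underlying machinery surfaces is the proof of Lemma~\ref{pippo}, which recalls the key Grieser--Jerison estimates. Measured against that benchmark, your plan is a faithful reconstruction of the \emph{architecture} of the actual proof (adiabatic ansatz $\psi(x)\phi_x(y)$ with $\phi_x$ the slice ground state, reduction to $-\partial_x^2+\pi^2/h(x)^2$, a single node by Sturm theory, width controlled by error divided by transversal slope), but as a proof it is incomplete precisely at the step you yourself label ``the main obstacle'', and two of your quantitative claims would fail. First, the needed error bound is not just a uniform smallness of $E=u_N-\psi\,\phi_x$: one needs the \emph{weighted} estimate $\abs{E(x,y)}\le C\,L_N^{-3}\sin\bigl(\pi\tfrac{y-f_{1,N}(x)}{h_N(x)}\bigr)\bigl(1+\abs{x}\,\abs{\log(\cdots)}\bigr)$ of \cite[Lemma 5]{gj2} (quoted in Lemma~\ref{pippo}). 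The weight comparable to $\phi_x$ is essential because $\phi_x$ vanishes along the \emph{entire} top and bottom edges, not only at the two tips where $h\to0$ as you assert; without it, your transfer estimate $x\approx -E/(\psi'(0)\phi_x(y))$ degenerates as $y\to0,1$ everywhere along the nodal line. Combined with the transversality bound $\abs{\psi(x)}\ge c\abs{x}$ (\cite[equation (26)]{gj2}) and the scale bound $N^{1/3}\le L_N\le N$ of~\ref{boundL}, this yields width $\lesssim L_N^{-3}\le C/N$. The intermediate scale $L_N$ --- the length of the interval where $h\ge \max h-1/L_N^2$ --- is the organizing quantity of the whole argument and is entirely absent from your sketch; the rate $C/N$ comes from $L_N^{-3}\le N^{-1}$, not from inverting $L$ against a spectral gap.

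Second, the spectral-gap heuristic you propose is quantitatively wrong: for a long rectangle $V$ is constant and the gap $\mu_2-\mu_1$ is of order $N^{-2}$, not $N^{-1}$, so ``playing the source terms against the gap'' cannot by itself produce the $1/N$ rate. Grieser--Jerison instead control the remainder by barrier and maximum-principle arguments carried out in the two-dimensional domain, and the lower bound on the slope of $\psi$ at its node is exactly where the non-trivial choice of normalization enters (the issue the present paper flags in its introduction and implements via $\widehat u_N=L_N u_N/\norm{u_N}_\infty$); your sketch treats $\abs{\psi_2'(0)}$ as an output of convexity alone, which is not established. Likewise, pinning the node ``near the bottom of the well'' is only needed, and only true, up to an order-one translation --- harmless for the statement, but your comparison argument for it is not supplied. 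In short: correct skeleton, but the weighted error estimate, the role of $L_N$, and the transversality bound --- i.e.\ the actual content of \cite{gj2} --- are missing, and the proposed gap-inversion replacement would not close them.
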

This result is our starting point to compute the number of critical points of $u_N$ in $\Om_N$. We have the following theorem.
\begin{thm}
\label{thm1b}
For $N$ large enough, $u_N$ has exactly two critical points  $P_N,Q_N\in\Omega_N$. Moreover $P_N$ (say)  is a nondegenerate maximum point while $Q_N$ is a nondegenerate minimum. Finally $|P_N|,|Q_N|\to+\infty$ as $N\to+\infty$.
\end{thm}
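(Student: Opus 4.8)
The plan is to \emph{localize} all critical points into a well-understood ``bulk'' region by means of the Jerison and Grieser--Jerison estimates, and then to \emph{count} them there by a topological degree argument, comparing $u_N$ with a separated model profile. First I would fix the nodal structure. By Theorem~\ref{nodalline} the nodal line $\Lambda_N$ is squeezed into the thin vertical strip $\{|x|<C_0/N\}$, and by Courant's nodal domain theorem $u_N$ has exactly two nodal domains $\Omega_N^+=\{u_N>0\}$ and $\Omega_N^-=\{u_N<0\}$, each simply connected; on each the restriction of $\pm u_N$ is a \emph{positive} first Dirichlet eigenfunction, with $\lambda_2(\Omega_N)=\lambda_1(\Omega_N^\pm)$. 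Since by~\cite{m} the nodal set of the second eigenfunction of a convex domain is a simple arc meeting $\partial\Omega_N$ at exactly two points, $\nabla u_N$ does not vanish on $\Lambda_N$; hence every critical point lies in the interior of one nodal domain.

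Second, a degree computation on each nodal domain. On $\partial\Omega_N^+$ one has $u_N=0$, and by the Hopf lemma on the smooth boundary arcs together with the transversality of $\nabla u_N$ across $\Lambda_N$ (where $u_N$ passes from negative to positive), the gradient $\nabla u_N$ is nonzero and points strictly into $\Omega_N^+$. The Poincar\'e--Hopf theorem for an inward-pointing field then gives
\[
\sum_{P\in\Omega_N^+,\ \nabla u_N(P)=0}\operatorname{ind}_P(\nabla u_N)=\chi(\Omega_N^+)=1,
\]
and similarly for $\Omega_N^-$. This forces at least one critical point in each nodal domain, but by itself does not exclude extra saddle--extremum pairs.

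Third --- and this is the crux --- I would use the finer estimates of~\cite{j,gj2} to show that $u_N$ is $C^1$-close, away from fixed neighbourhoods of $\Lambda_N$ and of the two degenerating ``tips'' of $\Omega_N$, to the separated model $\Psi_N(x,y)=\psi_N(x)\sin(\pi y)$, where the one-dimensional profile $\psi_N$ has a single nondegenerate maximum at some $x_+$ in $\Omega_N^+$ (resp.\ minimum at $x_-$ in $\Omega_N^-$), located at distance of order $N$ from $\Lambda_N$. Two facts must be checked: (i) $\partial_y u_N$ vanishes only in a thin neighbourhood of the centre line $\{y=1/2\}$, forcing all critical points near it; and (ii) along the centre line $\partial_x u_N$ changes sign exactly once in each nodal domain, while near $\Lambda_N$ and near the tips the gradient stays bounded away from zero (near $\Lambda_N$ by transversality, near the tips because $u_N$ decays but $\partial_x u_N$ keeps one sign). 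Once $\nabla u_N$ is $C^0$-close to $\nabla\Psi_N$ on the bulk and nonvanishing off a small ball around each expected critical point, homotopy invariance of the degree yields exactly one zero near each, with the same nondegenerate index as for $\Psi_N$.

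Finally I would read off the conclusions. The unique critical point in $\Omega_N^+$ is a nondegenerate maximum and that in $\Omega_N^-$ a nondegenerate minimum, since at $(x_+,1/2)$ the Hessian is $C^0$-close to $\operatorname{diag}\!\big(\psi_N''(x_+),\,-\pi^2\psi_N(x_+)\big)$, both entries being nonzero and of the correct sign; together with the degree count this gives \emph{exactly} two critical points. Their escape, $|P_N|,|Q_N|\to+\infty$, follows because $x_\pm$ sit at distance $\sim N/4$ from the nodal strip. I expect the third step to be the main obstacle: converting the Grieser--Jerison localization of $\Lambda_N$ into the quantitative $C^1$ control of $u_N$ that the degree argument needs, and in particular ruling out hidden critical points in the boundary layers near $\Lambda_N$ and near the degenerating ends of $\Omega_N$.
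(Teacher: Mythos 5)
There is a genuine gap, and it sits exactly where you predicted. Your third step assumes that the estimates of~\cite{j,gj2} give $C^1$ (and, for nondegeneracy, $C^2$) closeness of $u_N$ to a separated profile $\Psi_N(x,y)=\psi_N(x)\sin(\pi y)$ \emph{in the bulk}, with a nondegenerate profile maximum. For a general convex $\Omega_N$ this is not available, and the quantitative scales make the comparison unworkable: near the expected maximum one has $|\nabla\Psi_N|=O(N^{-1})$ and $\psi_N''=O(N^{-2})$, while the Grieser--Jerison error for $u_N-\tilde u_N$ is only $O(L_N^{-3})$ with $N^{1/3}\le L_N\le N$ (see~\ref{boundL}), i.e.\ possibly as large as $O(N^{-1})$ --- comparable to the gradient and far larger than the Hessian scale. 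So neither your claim (ii) that $\partial_x u_N$ changes sign exactly once along the centre line, nor the nondegeneracy of the localized zero, can be extracted from the cited estimates; moreover the location ``$x_\pm$ at distance $\sim N/4$'' is a rectangle heuristic with no analogue for general convex domains (the paper only proves $|P_N|,|Q_N|\to+\infty$). A full expansion of the type you invoke exists precisely for the rectangle-like domains of~\cite{gj4} (Theorem~\ref{thmGJPacific}), and there the paper runs essentially your argument (Section~\ref{sez4}, Proposition~\ref{prop1}); for Theorem~\ref{thm1b} it deliberately avoids all bulk asymptotics. Instead it uses only the compact-set convergence $u_N\to A_0x\sin(\pi y)$ (Proposition~\ref{convcompatti}) on $\{|x|\le1\}$ and on the cut line $\{x=1/2\}$, and in the unbounded-looking piece $\Omega_N'=\Omega_N\cap\{x>1/2\}$ it deploys the Cabr\'e--Chanillo/De Regibus--Grossi--Molle vector field $T=(u_{yy}u_x-u_{xy}u_y,\,u_{xx}u_y-u_{xy}u_x)$: convexity (strictly positive curvature $\curv$, Hopf lemma, star-shapedness with respect to $p_0=(1,1/2)$) shows that for \emph{every} $\theta$ the nodal set $N_\theta$ of $u_\theta$ meets $\partial\Omega_N'$ at exactly two points (Proposition~\ref{prop3b}), whence $M_\theta=\emptyset$, all critical points are nondegenerate, and the homotopy $tT+(1-t)(I-p_0)$ gives $\deg(\Omega_N',T,\orig)=1$, hence exactly one nondegenerate maximum --- with no information about where it is or what $u_N$ looks like there.

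Your second step also fails as stated. The boundary of each nodal domain is not a set where $\nabla u_N$ points strictly inward: at the two corner points $\Lambda_N\cap\partial\Omega_N$ the gradient \emph{vanishes} (these are boundary critical points, by~\cite[Lemma 1.2]{l2}; see Remark~\ref{rmk1b}, where they are identified as nondegenerate boundary saddles with $\det\hess_u(q_i)<0$), so Poincar\'e--Hopf for an inward-pointing field does not apply on $\partial\Omega_N^{\pm}$. Even after excising the corners --- where Proposition~\ref{convcompatti} does give the local saddle structure of $A_0x\sin(\pi y)$ --- the index identity $\sum\mathrm{ind}=1$ cannot rule out interior saddle--extremum pairs, as you concede; this is precisely why the paper works with $T$ rather than $\nabla u_N$: by Lemmas~\ref{lemma1} and~\ref{lemma2} the zeros of $T$ carry index $+1$ at nondegenerate extrema and $-1$ at points of $M_\theta$, so proving $M_\theta=\emptyset$ (your missing ingredient, obtained via convexity, not via bulk asymptotics) converts the single degree computation $\deg(\Omega_N',T,\orig)=1$ into an exact count.
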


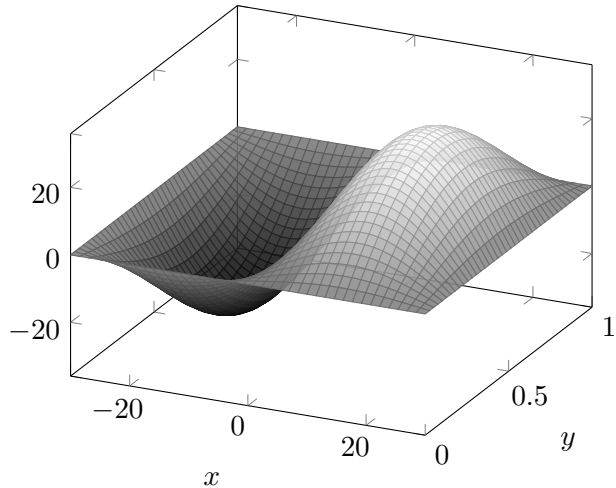
\begin{figure}[h]
\begin{tikzpicture}
\begin{axis}
[xlabel=$x$,
 ylabel=$y$]
\addplot3 [domain=-30:30,
 y domain=0:1, samples=40,surf,
colormap/blackwhite,
opacity=.90
 ]
{30*sin(deg(pi * x / 30)) * sin(deg(pi * y))};
\end{axis}
\end{tikzpicture}
\caption{A graph of $u_N$ for $N$ large.}
\end{figure}

The proof of the previous theorem is splitted in two parts. In the first one we deduce, up to a suitable normalization, the convergence on compact sets of the eigenfuction $u_N$ to the ``limit'' function $u_\infty(x,y)=A_0x\sin(\pi y)$ where $A_0$ is a nonzero constant. This will be done combining some results in~\cite{gj2} and~\cite{gj4}. We stress that the choice of the normalization of the eigenfunction $u_N$ is not a trivial issue, as already discussed in~\cite{j} and~\cite{gj2}.\\
The second part of the proof involves a topological argument: we introduce the vector field $T:\Omega_N\cap\left\{x>\frac12\right\}\to\R^2$
\[
T(q)=(u_{yy}(q)u_{x}(q)-u_{xy}(q)u_{y}(q),u_{xx}(q)u_{y}(q)-u_{xy}(q)u_{x}(q)),\quad q\in\Omega_N\cap\left\{\!x\!>\!\frac12\!\right\},
\]
which allows to ``count'' the critical points of $u_N$. It will be proved that the vector field $T$ is homotopic to the map $I-(x_0,y_0)$ with $(x_0,y_0)\in\Om_N\cap\set{x>\frac12}$ (the same will be done in $\Om_N\cap\set{x<-\frac12}$).
This result, jointly with some properties of the zeros of the vector field $T$,
will give the uniqueness and nondegeneracy of the critical point of $u_N$ in the set where $u_N>0$ and $u_N<0$ respectively.\\
All these computations strongly use the convexity of the domain $\Om_N$ and the convergence of $u_N$ to $u_\infty$. We stress that, although this convergence is only on compact sets, it will be enough to handle the computations in {\em all} $\Om_N$. 

In the last part of the paper we deal with a particular class of convex domain not included in the previous section, which are perturbation of rectangles which still converge to the strip. This family of domains has been studied in~\cite{gj4} where they give a full asymptotic expansion for the $m$-th Dirichlet eigenvalue and for the associated eigenfunction (see Theorem~\ref{thmGJPacific}).

Let $\phi:[0,1]\to[0,\infty)$ be a Lipschitz and concave function and for $N\in[0,\infty)$ set
\begin{equation}
\label{rett}
\mathcal  R_N:=\Set{(x,y)\in\R^2|0<y<1,\ -\phi(y)<x<N}.
\end{equation}

Let $u_{m,N}\in\mathcal C^\infty(\mathcal R_N)$ be the $m$-th Dirichlet eigenfunction in $\mathcal R_N$ which solves
\[
\begin{cases}
-\Delta u_{m,N}=\lambda_{m,N} u_{m,N}&\text{ in }\mathcal R_N\\
u_{m,N}=0&\text{ on }\partial \mathcal R_N.
\end{cases}
\]
where $\lambda_{m,N}$ is the $m$-th eigenvalue.
In next theorem we prove the existence of exactly $m$ critical points for $u_{m,N}$ in $\mathcal  R_N$.
\begin{thm}
\label{thm1}
For $N$ large enough, $u_{m,N}$ has exactly $m$ nondegenerate critical points in the set $\mathcal  R_N$. Moreover all of them are maxima and minima.
\end{thm}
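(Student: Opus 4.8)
The plan is to reproduce, for the $m$-th eigenfunction, the two-step scheme used for Theorem~\ref{thm1b}, with the nodal-line estimate of Theorem~\ref{nodalline} replaced by the full asymptotic expansion of~\cite{gj4} (Theorem~\ref{thmGJPacific}), which holds for the domains $\mathcal R_N$. Since the $y$-section of $\mathcal R_N$ is the fixed interval $(0,1)$, the limiting strip $\Om_\infty$ forces the transverse profile to be, to leading order, $\sin(\pi y)$, while the longitudinal profile solves a one-dimensional Dirichlet problem on an effective interval of length $L_N=N+O(1)$. First I would fix the normalization (the delicate point already stressed in~\cite{j,gj2}): rescaling $\xi=x/L_N$ and normalizing $u_{m,N}$ by a suitable constant $c_N$, I would show, using Theorem~\ref{thmGJPacific}, that the rescaled normalized eigenfunction $v_N(\xi,y):=c_N\,u_{m,N}(L_N\xi,y)$ converges to
\[
v_\infty(\xi,y)=\sin(m\pi\xi)\sin(\pi y)
\]
in $C^2$ on every compact subset of $(0,1)\times(0,1)$. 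This $C^2_{\mathrm{loc}}$ control, and not merely the location of the nodal set, is what the full expansion delivers. The rescaling multiplies $\partial_x u_{m,N}$ by $L_N$ only, hence it preserves the critical points of $u_{m,N}$, the sign of $\det\hess u_{m,N}$, and the maximum/minimum/saddle type, so it is harmless for the count.

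Next I would read off and transfer the critical points of the limit. One has $\nabla v_\infty=0$ exactly at the $m$ points $(\xi_k,\tfrac12)$ with $\xi_k=\tfrac{2k-1}{2m}$, $k=1,\dots,m$; at each of them the Hessian is $\mathrm{diag}\big(-(m\pi)^2\sigma_k,\,-\pi^2\sigma_k\big)$ with $\sigma_k=\sin(m\pi\xi_k)=(-1)^{k+1}$, so all critical points are nondegenerate and are maxima (for $k$ odd) or minima (for $k$ even), with no saddle. A quantitative implicit function theorem (equivalently, a local Brouwer degree argument) then turns the $C^2_{\mathrm{loc}}$ convergence into the statement that, for $N$ large, $\nabla v_N$ has exactly one zero in a small disk about each $(\xi_k,\tfrac12)$, nondegenerate and of the same type, while $|\nabla v_N|$ stays bounded away from $0$ on every compact subset of $(0,1)^2$ avoiding these disks. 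Undoing the scaling, this produces exactly $m$ critical points of $u_{m,N}$ of the asserted type in the bulk of $\mathcal R_N$.

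It remains --- and this is the main obstacle --- to exclude critical points in the thin regions where the compact convergence is silent: the boundary collar of $\mathcal R_N$, and especially the curved left cap $\{x=-\phi(y)\}$ (which collapses to $\xi=0$ under the rescaling), the right end $\{x=N\}$, and the points where the $m-1$ nodal lines meet $\partial\mathcal R_N$. Along the interior of the flat sides $y=0,1$ and of the right end the Hopf lemma already forbids critical points, since the normal derivative of $u_{m,N}$ does not vanish there. The remaining pieces I would treat exactly as in Theorem~\ref{thm1b}, relying on the convexity of $\mathcal R_N$ (a consequence of the concavity of $\phi$): the vector field $T$ from the introduction is well defined and nonvanishing on the boundary of each of the $m$ lobes cut out by the nodal lines, and the homotopy identifying $T$ with a translate of the identity shows that its degree over each lobe accounts for a single critical point, excluding spurious zeros near the cap, the ends, and the nodal/boundary junctions. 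The technical heart is therefore this boundary analysis together with the verification that $T$ does not vanish on the separating curves, so that the degree is additive across the $m$ lobes; once this is secured, the interior count is forced by the limit profile $v_\infty$.
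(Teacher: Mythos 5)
Your bulk analysis (transfer of the $m$ nondegenerate interior extrema of the limit profile by a local degree/IFT argument) is sound and matches in spirit the paper's Proposition~\ref{prop1}, although the paper works without rescaling, directly with $v_m$ and the uniform $O(N^{-3})$ bound of~\ref{GJ:conv1}, which holds up to the boundary on all of $\{x>3\log N,\ 0<y<1\}$ --- your $C^2$ convergence only on compact subsets of the open square is strictly weaker and by itself leaves a boundary collar uncovered even in the bulk. The genuine gap, however, is in the step you yourself flag as the technical heart: the $T$-field degree over the $m$ lobes does \emph{not} work for $\mathcal R_N$. The boundary analysis of Theorem~\ref{thm1b} hinges on $\curv>0$ on $\partial\Omega_N$ (this is what makes $T\ne\orig$ on $\partial\Omega_N'\cap\partial\Omega_N$ in Proposition~\ref{prop1b}), and on the flat sides of $\mathcal R_N$ one has $\curv\equiv0$; worse, $T$ \emph{must} vanish there. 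On $\{y=0\}$ one has $u_x=u_{xx}=0$, so $T=(-u_{xy}u_y,0)$; inside a lobe's bottom side $u_y(\cdot,0)>0$ by Hopf, while it vanishes at the lobe's endpoints (the nodal feet, where the gradient vanishes by Alessandrini's transversality, \cite[Theorem 1]{a}), so Rolle gives an interior boundary point with $u_{xy}=0$, i.e.\ $T=\orig$ on $\partial(\text{lobe})$ --- roughly beneath each extremum. The nodal feet themselves are further zeros of $T$ on the lobe boundary, and if you cut along the nodal lines the limit profile even has $T=\orig$ at their mid-height crossings (both components of $T$ are proportional to $\sin X\cos X$ and $\sin\pi y\cos\pi y$ with $X=m\pi\frac{x+a}{N+a}$). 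So $\deg(T,\text{lobe},\orig)$ is not defined and the homotopy to $I-p_0$ cannot be run; ``verifying $T\ne\orig$ on the separating curves'' is not a technicality but a false statement in this geometry.

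The paper's actual route avoids $T$ entirely, and you are missing its two key devices. First, near the nodal feet and the right corners (cases~\ref{p3}--\ref{p6}), it uses Alessandrini's theorem to recognize the boundary point $p_N$ as a nondegenerate \emph{saddle} of $u_N$, and then derives a contradiction from a degree count of $\nabla u_N$ (homotoped to $\nabla v$) on a small annulus separating $p_N$ from any putative interior critical point, using that $\det\hess u_N<0$ throughout that neighborhood by~\ref{saddle}. Second, and most importantly, in the cap region $\{x<x_N'\}$ --- where Theorem~\ref{thmGJPacific} yields only $\sup|u_{m,N}|=O(N^{-1}\log N)$ and \emph{no} gradient information, so that neither your rescaled compact convergence (the cap collapses to $\xi=0$) nor Hopf (which excludes only boundary critical points, not interior zeros of $\nabla u_{m,N}$ in the collar) says anything --- the paper proves $\partial_x u_{m,N}>0$ outright: by domain monotonicity $\lambda_1(\mathcal R_N')>\lambda_{m,N}$, so $-\Delta-\lambda_{m,N}$ satisfies the maximum principle in $\mathcal R_N'$, and $\partial_x u_{m,N}\ge0$ on $\partial\mathcal R_N'$ (from the expansion on $\{x=x_N'\}$ and from Hopf together with $\nu_x\le0$ on the cap, which is where the concavity of $\phi$ enters). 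You have no substitute for this monotonicity argument, and without it the region $\{x\le3\log N\}$ remains genuinely uncontrolled.
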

Unlike Theorem~\ref{thm1b}, the proof of Theorem~\ref{thm1} is much easier and it strongly follows by the estimates proved in~\cite{gj4}.\\
The paper is organized as follows: in the next section we recall some notations and some results for the second eigenfunction on convex domain with high eccentricity from the papers of Jerison and Grieser, Jerison and in the next one we extrapolate the local convergence of $u_N$ to $u_\infty$ (see Proposition~\ref{convcompatti}). Section~\ref{sez3} is devoted to the topological argument where we perform the computations involving the vector field $T$ and we prove Theorem~\ref{thm1b}. 
Finally, in the last section we investigate the eigenfunctions on convex perturbations of long rectangles, proving Theorem~\ref{thm1}.

\section{Preliminary results}
\label{sez2}
In this section we collect some results proved in~\cite{j,gj2} (see also~\cite{j2} for an overview of the problem).
As we pointed out in the Introduction, let us rotate $\Omega_N$ so that its projection on the $y$-axis has the shortest possible length, then dilate so that this projection has length $1$. Denote by $N$ the length of the projection of the domain on the $x$-axis, then $N\ge1$. Hence, we write
\[
\Omega_N=\Set{(x,y)\in\R^2|f_{1,N}(x)<y<f_{2,N}(x),\quad x\in(a_N,b_N)},
\]
where $b_N-a_N=N$, $0\le f_{1,N}\le f_{2,N}\le1$, and the height function of $\Omega_N$ is $h_N:=f_{2,N}-f_{1,N}$. We require that
\[
f_{1,N}\to0\hbox{ and }f_{2,N}\to1\hbox{ in }C^\infty_{loc}(\R)\hbox{ as }N\to+\infty.
\]
By the convexity of $\Om_N$ we have that $f_{1,N}''\le0$ and $f_{2,N}''\ge0$.
Our assumptions imply that the set $\Om_N$ ``converges'' to the strip
\[
\Om_\infty:=\Set{(x,y)\in\R^2|0<y<1}.
\]
More precisely we have that for all compact sets $K\subset\R^2$ one has $\abs{(\Omega_N\Delta\Om_\infty)\cap K}\to0$. As we recalled in the Introduction we know that the nodal line
\[
\Lambda_N:=\overline{\Set{(x,y)\in\Omega_N|u_N(x,y)=0}},
\]
is close to the straight line $\{x=0\}$, up to a translation (see Theorem~\ref{nodalline} in the Introduction above). Finally let  $u_N\in\mathcal C^\infty(\Omega_N)$ be the solution of
\begin{equation}
\label{PB1b}
\begin{cases}
-\Delta u=\lambda_{2,N} u&\text{ in }\Omega_N\\
u=0&\text{ on }\partial\Omega_N,
\end{cases}
\end{equation}
and for all $(x_0,y_0)\in\Lambda_N\cap\Omega_N$ we can assume that $u_N(x_0+1,y_0)>0$ and $u_N(x_0-1,y_0)<0$, that is $u_N>0$ on the right of the nodal line and $u_N$ is negative on the left.

Finally, let $L_N$ be the length of the longest interval $I_{L_N}\subset(a_N,b_N)$ such that
\[
h_N(x)=f_{2,N}-f_{1,N}\ge1-\frac{1}{L_N^2},\quad\text{in }I_{L_N}.
\]
The number $L_N$ is related to the length of the rectangle contained in $\Omega_N$ with lowest first eigenvalue and it satisfies the following bounds (see~\cite{gj2,j2})
\begin{equation}
\label{boundL}
N^{1/3}\le L_N\le N.
\end{equation}

For future convenience, we introduce for $k\in\R$ the sets
\[
\Omega_N^k:=\Set{(x,y)\in\Omega_N|-k<x<k},
\]
and
\[
\Omega_\infty^k:=\Set{(x,y)\in\R^2|-k<x<k,\,\,0<y<1},
\]
where we remember that $\Omega_\infty=\R\times(0,1)$ is the infinite strip of height $1$.
Since $0\le f_{1,N}\le f_{2,N}\le1$, we have that the continuous embedding $H^1_0(\Omega_N)\embed H^1_0(\Omega_\infty)$ holds true by means of zero extension outside $\Omega_N$.

An important step to deduce good estimates for the eigenfunction $u_N$ is to choose a correct normalization. So let us define $\widehat u_N$ as
\[
\widehat u_N:=L_N\frac{u_N}{||u_N||_\infty}.
\]
With a little abuse of notation, in the following we will set 
\[
\widehat u_N=u_N.
\]
From the results in~\cite{gj2} we will deduce the following lemma.
\begin{lemma}
\label{pippo}
There exists a positive constant $C$  independent of $N$, such that 
\begin{equation}
\label{stimasublineare}
\abs{u_N(x,y)}\le C(1+\abs{x}),\quad\forall (x,y)\in\Omega_N,
\end{equation}
and
\begin{equation}
\label{stimasuplineare}
\abs{u_N(\pm1,1/2)}\ge \frac{1}{C}.
\end{equation}
\end{lemma}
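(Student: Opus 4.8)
The plan is to reduce both inequalities to one–dimensional statements about the profile of $u_N$ along the long direction, obtained by projecting $u_N$ onto the first Dirichlet mode of each vertical slice, and then to exploit the normalization $\norm{u_N}_\infty=L_N$. For $x\in(a_N,b_N)$ let $\phi_{N,x}(y)$ be the $L^2$–normalized first Dirichlet eigenfunction of the interval $(f_{1,N}(x),f_{2,N}(x))$, with eigenvalue $V_N(x)=\pi^2/h_N(x)^2$, and set
\[
\psi_N(x)=\int_{f_{1,N}(x)}^{f_{2,N}(x)} u_N(x,y)\,\phi_{N,x}(y)\,dy .
\]
The estimates of Grieser--Jerison in~\cite{gj2} provide the approximate separation of variables $u_N(x,y)=\psi_N(x)\phi_{N,x}(y)+r_N(x,y)$, with the remainder $r_N$ uniformly small relative to the size $L_N$ of $u_N$, and show that $\psi_N$ solves, up to a controlled error, the effective equation $-\psi_N''+V_N\psi_N=\la_{2,N}\psi_N$. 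Since $V_N\ge\pi^2$ with equality exactly where $h_N=1$, and $\la_{2,N}=\pi^2+O(L_N^{-2})$, the classically allowed region $\{V_N<\la_{2,N}\}$ is an interval of length $\asymp L_N$ — this is precisely the content of the definition of $L_N$ and of the bounds~\ref{boundL} — and the (unique) node of $\psi_N$ sits near $x=0$ by Theorem~\ref{nodalline}.

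The quantitative input I would extract from~\cite{gj2} is the following description of $\psi_N$ on the positive lobe (the negative one being symmetric): first, $\abs{\psi_N(x)}\le L_N$ for every $x$, since $\abs{\psi_N(x)}\le\norm{u_N(x,\cdot)}_\infty\,\norm{\phi_{N,x}}_{L^1}\le L_N$; second, $\norm{\psi_N}_\infty\asymp L_N$ and this maximum is attained at a point $x_N$ with $\abs{x_N}\asymp L_N$, i.e. the lobe fills the well; third, inside the well $V_N-\la_{2,N}=O(L_N^{-2})$ while $\abs{\psi_N}\le L_N$, so $\abs{\psi_N''}=\abs{V_N-\la_{2,N}}\,\abs{\psi_N}=O(L_N^{-1})$, and integrating from the peak, where $\psi_N'(x_N)=0$, bounds the slope at the node by $\psi_N'(0)\le C$, while concavity together with $\norm{\psi_N}_\infty\asymp L_N$ and $\abs{x_N}\asymp L_N$ gives $\psi_N'(0)\ge1/C$.

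The upper bound~\ref{stimasublineare} then follows by splitting into two ranges. For $\abs{x}\le\abs{x_N}$ the function $\psi_N$ is concave (there $\psi_N''=(V_N-\la_{2,N})\psi_N\le0$) and increasing from $\psi_N(0)=0$, so it lies below its tangent at the node and $\abs{\psi_N(x)}\le\psi_N'(0)\abs{x}\le C\abs{x}$; for $\abs{x}\ge\abs{x_N}$ one simply uses $\abs{\psi_N(x)}\le\norm{\psi_N}_\infty\le L_N\le C\abs{x_N}\le C\abs{x}$. Hence $\abs{\psi_N(x)}\le C(1+\abs{x})$, and adding the remainder bound for $r_N$ gives~\ref{stimasublineare}. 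The lower bound~\ref{stimasuplineare} uses the same concavity in the reverse direction: on $[0,x_N]$ the graph of $\psi_N$ lies above the chord through $(0,0)$ and $(x_N,\psi_N(x_N))$, so $\abs{\psi_N(1)}\ge\norm{\psi_N}_\infty/\abs{x_N}\ge1/C$, and symmetrically at $x=-1$. Since the vertical factor is bounded below on the midline, $\phi_{N,x}(1/2)\ge1/C$ (because $\phi_{N,x}\to\sqrt2\sin(\pi y)$ and $f_{1,N}\to0$, $f_{2,N}\to1$), combining with the remainder estimate yields $\abs{u_N(\pm1,1/2)}\ge1/C$.

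The main obstacle is not the elementary concavity arguments above but making the one–dimensional reduction quantitative and uniform in $N$: one must read off from~\cite{gj2} the $L^\infty$ smallness of $r_N$ at the scale $L_N$ and, above all, the two–sided estimate $\abs{x_N}\asymp L_N$ for the peak location, equivalently the non–degeneracy $\psi_N'(0)\asymp1$ of the slope at the node. It is exactly this uniform non–degeneracy — which rests on the geometry of the well through $h_N$ and the bounds~\ref{boundL} — that prevents the profile from flattening at the node and thereby underlies the lower bound~\ref{stimasuplineare}; the upper bound is comparatively soft, since the envelope $C(1+\abs{x})$ is generous far from the well.
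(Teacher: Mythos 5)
Your decomposition is exactly the one the paper (following Grieser--Jerison) uses: the same slice projection $\psi_N$, the same approximate product $\tilde u_N=\psi_N\,\phi_{N,x}$ and remainder $v_N=u_N-\tilde u_N$. The difference lies in what is proved versus cited. The paper's proof of \ref{stimasublineare} is a direct citation of \cite[Theorem 4]{gj2}, and its proof of \ref{stimasuplineare} rests on two precise quoted estimates: \cite[Lemma 5]{gj2}, giving $\abs{v_N}\le C L_N^{-3}$ on $\Omega_N\cap\{\abs{x}\le 2\}$, and \cite[Equation (26)]{gj2}, giving $\abs{\psi_N(x)}\ge C\abs{x}$ for $\abs{x}<2$; the lemma then follows by the triangle inequality at $x=\pm1$ and a small shift in $y$ since $(f_{1,N}(1)+f_{2,N}(1))/2\to 1/2$. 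You instead attempt to rederive these inputs from an effective one-dimensional Schr\"odinger picture with concavity arguments, and this is where the proposal has a genuine gap rather than a genuinely different proof.

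Concretely: the crucial nondegeneracy you need, $\psi_N'(0)\asymp 1$, which you propose to deduce from $\norm{\psi_N}_\infty\asymp L_N$ attained at a peak with $\abs{x_N}\asymp L_N$, is not proved in your argument --- you defer it to \cite{gj2}. But \cite[Equation (26)]{gj2} \emph{is} precisely this nondegenerate slope at the node, while the two-sided peak location $\abs{x_N}\asymp L_N$ is not stated there in the form you use; so your derivation replaces the needed estimate by an input at least as deep, i.e.\ it is circular as a self-contained proof. Secondary soft spots: (i) $\psi_N$ satisfies the effective equation $-\psi_N''+V_N\psi_N=\lambda_{2,N}\psi_N$ only up to an inhomogeneous error, so $\psi_N$ is only approximately concave, and your chord/tangent comparisons require that error tracked quantitatively; you also need the whole interval $[0,x_N]$ to lie in the classically allowed region $\{V_N<\lambda_{2,N}\}$, i.e.\ that the node sits deep inside the well, again an imported fact. (ii) Your upper bound for $u_N$ (as opposed to $\psi_N$) requires a pointwise control of the remainder on \emph{all} of $\Omega_N$, whereas the available estimate (\cite[Lemma 5]{gj2}, the one the paper uses) holds only for $\abs{x}\le 2$; in the intermediate range $1\ll\abs{x}\ll L_N$ bounding $r_N$ by $C(1+\abs{x})$ is essentially equivalent to \ref{stimasublineare} itself --- which is why the paper cites \cite[Theorem 4]{gj2} for it outright rather than rederiving it. Your final paragraph correctly identifies these as the obstacles, but identifying them does not close them; with the two citations the paper actually uses, your remaining computation coincides with the paper's.
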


\begin{proof}
The first estimate~\ref{stimasublineare} is proved in~\cite[Theorem 4]{gj2}.

To prove~\ref{stimasuplineare}, still recalling~\cite{gj2}, define the following function
\[
\tilde u_N(x,y):=\psi_N(x)\sqrt{\frac{2}{h_N(x)}}\sin\left(\pi\frac{y-f_{1,N}(x)}{h_N(x)}\right),
\]
where
\[
\psi_N(x):=\sqrt{\frac{2}{h_N(x)}}\int_{f_{1,N}(x)}^{f_{2,N}(x)}\sin\left(\pi\frac{y-f_{1,N}(x)}{h_N(x)}\right)u_N(x,y)\,dy.
\]
Note that $\tilde u_N(x,y)\sim\sqrt2\sin(\pi y)\psi_N(x)$ and $\psi_N(x)\sim \sqrt2\int_0^1\sin(\pi y)u_N(x,y)\,dy$ if $x$ is bounded.
Finally, let $v_N:=u_N-\tilde u_N$.

Now, let $C>0$ be any positive constant independent from $N$ which may vary in the rest of the proof and recall the following estimates.~\cite[Equation (26)]{gj2} tells us
\[
\abs{\psi_N(x)}\ge C\abs{x},\quad-2<x<2,
\]
and~\cite[Lemma 5]{gj2} gives for all $(x,y)\in\Omega_N^2$
\begin{align*}
&\abs{v_N(x,y)}\\
&\le \sqrt{\!\frac{2}{h_N(x)}}\sin\!\left(\!\pi\frac{y-f_{1,N}(x)}{h_N(x)}\right)\!\left(1+\abs{x}\left|\log\!\left(\!\sqrt{\frac{2}{h_N(x)}}\sin\!\left(\!\pi\frac{y-f_{1,N}(x)}{h_N(x)}\right)\right)\right|\right)\!L_N^{-3}\\
&\le\frac{C}{L_N^3}.
\end{align*}
Hence for $(x,y)\in\Omega_N^2$ one has
\begin{align*}
\abs{u_N(x,y)}&=\abs{\tilde u_N(x,y)+v_N(x,y)}\\
	&\ge\left|\psi_N(x)\sqrt{\frac{2}{h_N(x)}}\sin\left(\pi\frac{y-f_{1,N}(x)}{h_N(x)}\right)\right|-\abs{v_N(x,y)}\\
	&\ge\abs{\psi_N(x)}\sin\left(\pi\frac{y-f_{1,N}(x)}{h_N(x)}\right)-\frac{C}{L_N^3}\\
	&\ge C\abs{x}\sin\left(\pi\frac{y-f_{1,N}(x)}{h_N(x)}\right)-\frac{C}{L_N^3}.
\end{align*}
Finally, since for $N\to+\infty$ from~\ref{boundL} also $L_N\to+\infty$, one has $(\pm1,(f_{1,N}(1)+f_{2,N}(1))/2)\to(\pm1,1/2)$, and then we have
\begin{align*}
\abs{u_N(\pm1,1/2)}&=\abs{u_N(\pm1,(f_{1,N}(1)+f_{2,N}(1))/2)}+o(1)\\
&\ge C\abs{\pm1}(1+o(1))\ge\frac{C}{2}.\qedhere
\end{align*}
\end{proof}

\begin{rmk}
From~\ref{stimasublineare} one has
\begin{equation}
\label{stimacompattik}
\norm{u_N}_{L^\infty(\Omega_\infty^{k})}\le C(1+k),\quad\forall k\in\N.
\end{equation}
\end{rmk}

The following lemma follows by the standard elliptic regularity theory.

\begin{lemma}
\label{reg}
For $m\in\N$, $f\in H^m(\Omega_N^{k+1})$, let $u\in H^1(\Omega_N^{k+1})$ be a weak solution of 
\[
\begin{cases}
-\Delta u=f&\text{in }\Omega_N^{k+1}\\
u=0&\text{on }\partial\Omega_N^{k+1}\setminus\set{x=\pm(k+1)}.
\end{cases}
\]
Then for $\delta\in(0,1)$ it holds
\[
u\in H^{m+2}(\Omega_N^{k+\delta}),
\]
with the estimate
\[
\norm{u}_{H^{m+2}(\Omega_N^{k+\delta})}\le C\left(\norm{f}_{H^m(\Omega_N^{k+1})}+\norm{u}_{L^2(\Omega_N^{k+1})}\right),
\]
for some $C>0$ independent from $N$.
\end{lemma}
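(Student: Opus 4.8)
The statement is a by-now-standard boundary elliptic regularity estimate, so the plan is to reduce it to the textbook case while keeping track of two nonstandard features: the constant $C$ must be independent of $N$, and no boundary condition is prescribed on the two vertical cuts $\{x=\pm(k+1)\}$. I would handle the first feature by flattening $\Omega_N^{k+1}$ onto a fixed rectangle, and the second by an $x$-cutoff that has the pleasant side effect of removing the corners of the domain.

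First I would flatten. Consider the diffeomorphism $\Phi_N(x,y)=\left(x,\dfrac{y-f_{1,N}(x)}{h_N(x)}\right)$, which sends $\Omega_N$ onto the strip $\Omega_\infty=\R\times(0,1)$ and $\Omega_N^{k+1}$ onto the fixed rectangle $R:=(-(k+1),k+1)\times(0,1)$. Setting $\tilde u:=u\circ\Phi_N^{-1}$ and $\tilde f:=f\circ\Phi_N^{-1}$, the equation $-\Delta u=f$ becomes $L_N\tilde u=\tilde f$ on $R$, where $L_N$ is a second-order operator in divergence form whose coefficients are built from $f_{1,N},f_{2,N},h_N$ and their derivatives. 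The crucial observation is that, by $f_{1,N}\to0$ and $f_{2,N}\to1$ in $C^\infty_{loc}(\R)$, these coefficients are bounded in $C^m(\overline R)$ uniformly in $N$; moreover $h_N=f_{2,N}-f_{1,N}\to1$ uniformly on $[-(k+1),k+1]$, so $h_N\ge\tfrac12$ there for $N$ large, which makes $L_N$ uniformly elliptic. Thus $\tilde u$ solves a uniformly elliptic equation, with coefficients bounded uniformly in $N$, on a domain that no longer depends on $N$, and $\tilde u=0$ on the smooth horizontal sides $\{t=0\}\cup\{t=1\}$.

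Next I would introduce the cutoff and bootstrap. Fix $\delta<\delta'<1$ and a smooth $\eta=\eta(x)$ with $\eta\equiv1$ on $[-(k+\delta),k+\delta]$ and $\supp\eta\subset(-(k+\delta'),k+\delta')$. The product $w:=\eta\tilde u$ vanishes on $\{t=0,1\}$ (because $\tilde u$ does) and near $\{x=\pm(k+1)\}$ (because $\eta$ does), so after extension by zero it is an $H^1_0$ solution on the whole strip $\Omega_\infty$, whose boundary is perfectly smooth; in particular the cutoff has disposed of the corners of $R$. It satisfies $L_N w=\eta\tilde f+[L_N,\eta]\tilde u$, where the commutator is a first-order expression in $\tilde u$ supported in $\supp\eta'$. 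I would first record the Caccioppoli estimate, obtained by testing the equation against $\eta^2\tilde u$, to bound $\norm{\tilde u}_{H^1(\supp\eta)}$ by $\norm{\tilde f}_{L^2(R)}+\norm{\tilde u}_{L^2(R)}$; combined with global $H^2$ regularity for $L_N$ on the strip applied to $w$, this gives the case $m=0$. For $m\ge1$ I would argue by induction, shrinking the cutoff radius through a finite nested chain $k+\delta=r_0<r_1<\dots<r_{m+1}=k+1$: at the $j$-th step the regularity already gained on a slightly larger radius puts the right-hand side $\eta\tilde f+[L_N,\eta]\tilde u$ in $H^{j}$, and global $H^{j+2}$ regularity on the strip upgrades $w$ accordingly. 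All constants here depend only on $m$, on the ellipticity constant of $L_N$ and on the uniform $C^m$-bounds of its coefficients, hence are independent of $N$.

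Finally I would transfer the estimate back through $\Phi_N$. Since $\Phi_N$ and $\Phi_N^{-1}$ have $C^{m+1}$-norms bounded uniformly in $N$ on the relevant compact sets (again by the $C^\infty_{loc}$ convergence and $h_N\ge\tfrac12$), the norms $\norm{u}_{H^{m+2}(\Omega_N^{k+\delta})}$ and $\norm{\tilde u}_{H^{m+2}((-(k+\delta),k+\delta)\times(0,1))}$, as well as $\norm{f}_{H^m(\Omega_N^{k+1})}$ and $\norm{\tilde f}_{H^m(R)}$, are comparable with $N$-independent constants, which yields the claimed bound. The one point that really needs care — and the only place where the hypotheses on $\Omega_N$ enter — is precisely this $N$-uniformity: everything hinges on the $C^\infty_{loc}$ convergence $f_{1,N}\to0$, $f_{2,N}\to1$ (which bounds the coefficients of $L_N$ and the derivatives of $\Phi_N$) together with the resulting uniform lower bound on the height $h_N$ over the fixed interval $[-(k+1),k+1]$ (which guarantees uniform ellipticity).
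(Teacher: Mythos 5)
Your proof is correct, and it supplies in full detail an argument the paper itself omits: the paper simply states that Lemma \ref{reg} ``follows by the standard elliptic regularity theory'' and remarks that the $N$-independence of $C$ comes from the convergence $\abs{(\Omega_N\Delta\Omega_\infty)\cap K}\to0$. Your flatten--cutoff--bootstrap scheme (the diffeomorphism $\Phi_N$ onto a fixed rectangle, the $x$-cutoff that simultaneously kills the vertical cuts and the corners, Caccioppoli plus global $H^2$ theory on the strip for the base case, and nested cutoffs for the induction on $m$) is exactly the standard machinery being invoked, so there is no divergence of method, only of explicitness. In fact your write-up sharpens the paper on one point: the measure convergence $\abs{(\Omega_N\Delta\Omega_\infty)\cap K}\to0$ that the paper cites would not by itself yield uniform bounds on the coefficients of the transformed operator $L_N$ or on the derivatives of $\Phi_N^{\pm1}$; what is really used is the assumed $C^\infty_{loc}$ convergence $f_{1,N}\to0$, $f_{2,N}\to1$ (hence uniform $C^j$-bounds on $f_{i,N}$ over $[-(k+1),k+1]$ and the lower bound $h_N\ge\tfrac12$ for $N$ large), and you correctly identify this as the sole source of the $N$-uniformity. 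Two small housekeeping points you may wish to make explicit: the conclusion holds for $N$ large (as everywhere in the paper), since the uniform ellipticity of $L_N$ requires $h_N\ge\tfrac12$ on the fixed interval; and when you extend $w=\eta\tilde u$ by zero to the strip you should also extend the coefficients of $L_N$ (say, constant in $x$ outside the rectangle) so that the equation for $w$ makes sense globally --- both are routine and do not affect the argument.
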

We point out that the independence from $N$ follows from the convergence of $\Omega_N$ to $\Omega_\infty$, that is the fact that $\abs{(\Omega_N\Delta\Omega_\infty)\cap K}\to0$, for all compact sets $K\subset\R^2$.

\section{The asymptotic behavior of $u_N$ }
In this section we study the limiting behavior of the solution $u_N$ on compact sets. In particular, $u_N$ converges to a function which is a solution in the whole strip $\Omega_\infty$.
\begin{prop}
\label{convcompatti}
Up to renormalize $u_N$, we have that for all multiindices $\alpha$, with $\abs{\alpha}\le2$ and fixed $k\in\N$, it holds
\begin{equation}
\label{GJ:conv1b}
\sup_{\overline\Omega_N\cap\{-k\le x\le k\}}\left| D^\alpha\big(u_N-A_0x\sin(\pi y)\big)\right|=o(1),\quad\text{for }N\to+\infty,
\end{equation}
for some suitable constant $A_0\ne0$.
\end{prop}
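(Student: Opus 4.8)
The plan is to obtain $u_\infty$ as a $C^2_{loc}$ limit of a subsequence of the renormalized eigenfunctions $\widehat u_N = L_N u_N/\norm{u_N}_\infty$ (which we have relabelled $u_N$), to identify the equation it solves on the strip, and then to classify all admissible limits by a separation-of-variables (Liouville-type) argument, the admissibility being encoded in the sublinear bound~\ref{stimasublineare}.

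First I would fix $k$ and work on $\Om_N^{k+1}$. By the remark following Lemma~\ref{pippo}, estimate~\ref{stimacompattik} bounds $\norm{u_N}_{L^\infty(\Om_\infty^{k+1})}$ uniformly in $N$; since $u_N$ solves $-\Delta u_N=\lambda_{2,N}u_N$ with $u_N=0$ on the top and bottom boundary, I would insert $f=\lambda_{2,N}u_N$ into the uniform elliptic estimate of Lemma~\ref{reg} and bootstrap on $m$. This yields bounds on $\norm{u_N}_{H^{m+2}(\Om_N^{k+\delta})}$ uniform in $N$ for every $m$, hence uniform $C^{2,\gamma}$ bounds on compact subsets by Sobolev embedding. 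A diagonal argument over $k$ then extracts a subsequence converging in $C^2_{loc}$ to some $u_\infty$ on $\Om_\infty$; the convergence $f_{1,N}\to0$, $f_{2,N}\to1$ in $C^\infty_{loc}$ guarantees that the boundary condition survives, so $u_\infty=0$ on $\{y=0\}\cup\{y=1\}$. Passing to the limit in the equation, and recording from~\cite{gj2} that $\lambda_{2,N}\to\pi^2$, gives
\[
-\Delta u_\infty=\pi^2 u_\infty \ \text{ in } \Om_\infty, \qquad u_\infty=0 \ \text{ on } \partial\Om_\infty,
\]
while~\ref{stimasublineare} passes to the limit as $\abs{u_\infty(x,y)}\le C(1+\abs{x})$.

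Next I would classify such $u_\infty$. Expanding $u_\infty(x,y)=\sum_{k\ge1}c_k(x)\sin(k\pi y)$, the equation forces $c_k''=(k^2-1)\pi^2 c_k$ for each $k$. For $k\ge2$ the coefficient is positive, so a nonzero combination of $e^{\pm\pi\sqrt{k^2-1}\,x}$ grows exponentially and the sublinear bound forces $c_k\equiv0$; for $k=1$ one gets $c_1''=0$, i.e. $c_1(x)=a+bx$, which is admissible. Hence $u_\infty=(a+bx)\sin(\pi y)$. To pin the constants, note that inside the strip the nodal set of $u_\infty$ is $\{a+bx=0\}$, while Theorem~\ref{nodalline} confines $\Lambda_N$ to $\abs{x}<C_0/N\to0$; passing to the limit forces $a=0$. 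Finally~\ref{stimasuplineare} gives $\abs{u_\infty(\pm1,1/2)}=\abs{b}\ge1/C$, so $b=:A_0\ne0$. Since the limit $A_0x\sin(\pi y)$ is then independent of the subsequence, the whole family converges, which is exactly~\ref{GJ:conv1b}.

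The main obstacle is the uniform-in-$N$ regularity up to the \emph{curved} top and bottom boundaries $\{y=f_{i,N}(x)\}$: the domains vary and converge only locally, so the constant in Lemma~\ref{reg} must be controlled independently of $N$—this is precisely where $\abs{(\Om_N\Delta\Om_\infty)\cap K}\to0$ and the $C^\infty_{loc}$ convergence of $f_{1,N},f_{2,N}$ enter. A secondary delicate point is making the separation-of-variables classification rigorous under only the linear growth hypothesis, namely justifying the $y$-Fourier expansion of $u_\infty$ and excluding the exponential modes purely from~\ref{stimasublineare}, and then transferring the vanishing of the nodal line into the normalization $a=0$.
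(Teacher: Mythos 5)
Your compactness-and-classification scheme is exactly the paper's: the uniform $H^{m+2}$ bounds from Lemma~\ref{reg} plus~\ref{stimacompattik}, the diagonal extraction to a $\mathcal C^2_{loc}$ limit solving $-\Delta u_\infty=\pi^2u_\infty$ on the strip (this is Lemma~\ref{convcompatti2}), and the Fourier classification with the sublinear bound~\ref{stimasublineare} killing the exponential modes (this is Lemma~\ref{conv3}). Your route to $a=0$ via the nodal-line confinement of Theorem~\ref{nodalline} is the same in substance as the paper's observation that $u_\infty(0,y)=0$, and~\ref{stimasuplineare} gives $b\ne0$ just as in the paper.

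However, your final step contains a genuine gap: you assert that ``the limit $A_0x\sin(\pi y)$ is then independent of the subsequence, the whole family converges.'' This does not follow from anything you (or the paper) have established. The classification only tells you that \emph{every} subsequential limit has the form $Ax\sin(\pi y)$ with $A>0$, and the available estimates merely confine $A$ to an interval $[1/C,C]$: the bounds~\ref{stimasublineare} and~\ref{stimasuplineare} control the slope from above and below but do not determine its value, and the normalization $L_N u_N/\norm{u_N}_\infty$ does not pin it down either. Two subsequences could perfectly well converge to $A_1x\sin(\pi y)$ and $A_2x\sin(\pi y)$ with $A_1\ne A_2$. This is precisely why the proposition is stated ``\emph{up to renormalize} $u_N$.'' The paper closes this hole with a renormalization-by-contradiction argument: if some subsequence $(u_{N_m})$ stays at $L^\infty$-distance at least $C>0$ from $A_0x\sin(\pi y)$, one reapplies Lemma~\ref{convcompatti2} and Lemma~\ref{conv3} to it, obtaining a sub-subsequence converging to $A_1x\sin(\pi y)$ with $A_1>0$, and then multiplies those terms by $A_0/A_1$ — the advertised renormalization — to force convergence to $u_\infty$, a contradiction. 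Without implementing some such renormalization, your full-sequence claim is unjustified, even though everything up to that point is correct.
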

The proof of the previous proposition is a consequence of the next two lemmas.

\begin{lemma}
\label{convcompatti2}
We have that there exists $u_\infty:\Om_\infty\to\R$ such that  for all multiindices $\alpha$, with $\abs{\alpha}\le2$ and fixed $k\in\N$, it holds up to subsequences
\begin{equation}
\label{GJ:conv1b2}
\sup_{\overline\Omega_N\cap\{-k\le x\le k\}}\left| D^\alpha\big(u_N-u_\infty\big)\right|=o(1),\quad\text{for }N\to+\infty,
\end{equation}
and $u_\infty$ solves
\[
\begin{cases}
-\Delta u_\infty=\pi^2u_\infty&\text{in }\Omega_\infty\\
u_\infty=0&\text{for }y=0,1.
\end{cases}
\]
\end{lemma}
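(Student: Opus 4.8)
The plan is to derive uniform-in-$N$ elliptic estimates for $u_N$ on the compact slices $\Omega_N^k$, extract a convergent subsequence by compactness, and identify the limit by passing to the limit in the equation. Throughout I will use the uniform local bound $\norm{u_N}_{L^\infty(\Omega_N^k)}\le C(1+k)$ supplied by Lemma~\ref{pippo} and~\ref{stimacompattik}, together with the fact that $\lambda_{2,N}\to\pi^2$; the latter is part of the asymptotics established in~\cite{gj2} and is consistent with $\Omega_N$ sitting inside the unit strip, whose lowest transverse Dirichlet eigenvalue is $\pi^2$ (and note $-\Delta\big(x\sin(\pi y)\big)=\pi^2 x\sin(\pi y)$, which is why $\pi^2$ is the right constant).

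First I would bootstrap regularity. Writing the equation as $-\Delta u_N=f_N$ with $f_N:=\lambda_{2,N}u_N$, the local $L^\infty$ bound gives a uniform $L^2(\Omega_N^{k+1})$ bound on both $u_N$ and $f_N$. Applying Lemma~\ref{reg} with $m=0$ yields a uniform $H^2(\Omega_N^{k+\delta})$ bound; since then $f_N=\lambda_{2,N}u_N$ is itself bounded in $H^2$, a second application of Lemma~\ref{reg} on a slightly smaller slice produces a uniform $H^4$ bound, and iterating on a nested family of slices gives uniform $H^m(\Omega_N^{k'})$ bounds for every $m$ and every $k'<k$. By the Sobolev embedding in dimension two, $H^4\hookrightarrow C^{2,\gamma}$, so $u_N$ is bounded in $C^{2,\gamma}(\overline{\Omega_N^{k'}})$ uniformly in $N$. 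The point that makes all of this work is precisely the $N$-independence of the constant in Lemma~\ref{reg}, which in turn rests on the convergence $\abs{(\Omega_N\Delta\Omega_\infty)\cap K}\to0$.

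To compare functions living on the moving domains $\Omega_N$ I would flatten them. Since $f_{1,N}\to0$ and $f_{2,N}\to1$ in $C^\infty_{loc}$, the map $\Phi_N(x,y)=(x,f_{1,N}(x)+h_N(x)y)$ is a diffeomorphism of the fixed strip slice $\Omega_\infty^k$ onto $\Omega_N^k$ with $\Phi_N\to\mathrm{id}$ in $C^\infty_{loc}$. The pulled-back functions $w_N:=u_N\circ\Phi_N$ then live on the fixed domain $\Omega_\infty^k$, vanish on $\{y=0\}\cup\{y=1\}$, solve a uniformly elliptic equation whose coefficients converge to those of $-\Delta$, and inherit the uniform $C^{2,\gamma}$ bounds. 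By Arzel\`a--Ascoli the sequence $w_N$ is precompact in $C^2(\overline{\Omega_\infty^{k'}})$, and a diagonal argument over $k\in\N$ extracts a single subsequence with $w_N\to w_\infty$ in $C^2_{loc}(\overline{\Omega_\infty})$. Transporting back through $\Phi_N\to\mathrm{id}$ yields the claimed $C^2$ convergence $u_N\to u_\infty:=w_\infty$ on the slices $\overline{\Omega_N}\cap\{-k\le x\le k\}$, which is exactly~\ref{GJ:conv1b2}.

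Finally I pass to the limit in the equation: the $C^2$ convergence of $w_N$, the convergence of the flattened coefficients to those of $-\Delta$, and $\lambda_{2,N}\to\pi^2$ let me pass to the limit termwise to obtain $-\Delta u_\infty=\pi^2u_\infty$ in $\Omega_\infty$, while the boundary values $w_N=0$ on $y=0,1$ pass to $u_\infty=0$ there by $C^0$ convergence. I expect the main obstacle to be the domain-flattening bookkeeping: because the $u_N$ live on distinct curved domains, the real work is in transferring the $N$-independent estimate of Lemma~\ref{reg} (which is stated up to the curved top and bottom boundary, the vertical cuts being interior to the larger slice) into genuine $C^2_{loc}$ compactness on a fixed domain and in checking that the transformed operators converge; once this is in place, the extraction of the limit and the identification of the limit equation are routine. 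Note also that the lower bound~\ref{stimasuplineare} gives $|u_\infty(\pm1,1/2)|\ge 1/C$, so $u_\infty\not\equiv0$; this will be used in the next lemma to identify $u_\infty=A_0x\sin(\pi y)$.
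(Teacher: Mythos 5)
Your proposal is correct, and it reaches~\ref{GJ:conv1b2} by a genuinely different mechanism than the paper in the one step where real work is needed, namely the $\mathcal C^2$ convergence up to the \emph{moving} top and bottom boundaries. The paper applies Lemma~\ref{reg} only once (with $m=0$) to get a uniform $H^2\left(\Omega_\infty^{k+\frac12}\right)$ bound, extracts a \emph{weak} $H^1$ limit $u_\infty^k$, identifies the limit equation in the weak formulation by testing against $\phi\in\mathcal C^\infty_0$ and using $\lambda_{2,N}\to\pi^2$, reserves the elliptic bootstrap for the limit function (to get $u_\infty^k\in\mathcal C^\infty$), and then upgrades to $\mathcal C^2$ convergence up to the boundary by standard local regularity on the half-balls $B(N)=\Omega_N\cap B_r(x,0)$, covering the segments $(-k,k)\times\{0\}$ and $(-k,k)\times\{1\}$ with finitely many balls; finally it patches the $u_\infty^k$ over $k$ by uniqueness of limits. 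You instead iterate Lemma~\ref{reg} on nested slices to get uniform $H^m$, hence $C^{2,\gamma}$, bounds on $u_N$ itself, flatten the moving domains onto the fixed strip via $\Phi_N(x,y)=(x,f_{1,N}(x)+h_N(x)y)$, and conclude by Arzel\`a--Ascoli plus a classical (pointwise) passage to the limit in the pulled-back equation. Both routes rest on the same inputs (the bound~\ref{stimacompattik} from Lemma~\ref{pippo}, the $N$-independence of the constant in Lemma~\ref{reg}, and $\lambda_{2,N}\to\pi^2$ from~\cite{gj2}). Your flattening buys a comparison of all functions on a single fixed domain and collapses the paper's two-stage weak-then-strong upgrade into one compactness step, at the price of coefficient bookkeeping for the transformed operator and a stronger use of the $C^\infty_{loc}$ convergence of $f_{1,N},f_{2,N}$ (which the paper assumes anyway); the paper's route avoids any change of variables but must handle the moving boundary through the ball-covering argument. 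Two points you should make explicit in a full write-up: the iterated applications of Lemma~\ref{reg} must run over a strictly nested family of slices so as to stay away from the corner points at the vertical cuts $x=\pm(k+1)$, and the embedding $H^4\hookrightarrow C^{2,\gamma}$ with an $N$-independent constant requires the uniform Lipschitz character of the slices $\Omega_N^{k'}$ --- both follow from your setup (convexity plus $f_{1,N}\to0$, $f_{2,N}\to1$ in $C^\infty_{loc}$) but are currently implicit.
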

\begin{proof}
In the proof of the lemma, convergence will be understood up to subsequences.

Fix $k\in\N$. From~\ref{stimacompattik} and Lemma~\ref{reg} we have
\[
\norm{u_N}_{H^2\left(\Omega_\infty^{k+\frac{1}{2}}\right)}\le C(k),
\]
for some $C(k)>0$ and so there exists $u_\infty^k\in H^1\left(\Omega_\infty^{k+\frac{1}{2}}\right)$ such that
\[
u_N\wto u_\infty^k\quad\hbox{weakly in }H^1\left(\Omega_\infty^{k+\frac{1}{2}}\right).
\]
Let us show that in $\Omega_\infty^{k+\frac{1}{2}}$ we have that
  $-\Delta u_\infty^k=\pi^2u_\infty^k$ in weak sense. Indeed, for all $\phi\in\mathcal C^\infty_0\left(\Omega_\infty^{k+\frac{1}{2}}\right)$ one has
\begin{align*}
\int_{\Omega_\infty^{k+\frac{1}{2}}}\nabla u_\infty^k\nabla\phi&=\int_{\Omega_\infty^{k+\frac{1}{2}}}(\nabla u_\infty^k\nabla\phi+u_\infty^k\phi)-\int_{\Omega_\infty^{k+\frac{1}{2}}}u_\infty^k\phi\\
&=\lim_N\int_{\Omega_\infty^{k+\frac{1}{2}}}(\nabla u_N\nabla\phi+u_N\phi)-\lim_N\int_{\Omega_\infty^{k+\frac{1}{2}}}u_N\phi\\
&=\lim_N\int_{\Omega_\infty^{k+\frac{1}{2}}}\nabla u_N\nabla\phi\\
&=\lim_N\lambda_{2,N}\int_{\Omega_\infty^{k+\frac{1}{2}}}u_N\phi\\
&=\pi^2\int_{\Omega_\infty^{k+\frac{1}{2}}}u_\infty^k\phi.
\end{align*}
Moreover, it is not difficult to see that 
\[
u_\infty^k=0,\quad\text{on }\partial\Omega_\infty^{k+\frac{1}{2}}\setminus\set{x=\pm(k+\frac{1}{2})},
\]
and by Lemma~\ref{reg} we obtain that $u_\infty^k\in\mathcal C^\infty\left(\Omega_\infty^{k+\frac{1}{3}}\right)$.

By~\ref{stimasuplineare} we deduce that $u_\infty^k\not\equiv 0$ in $\Omega_\infty^k$, and from the assumptions on the nodal lines of $u_N$ one has $u_\infty^k(0,y)=0$ for all $y\in(0,1)$ .

Next we show the $\mathcal C^2$ convergence up to the boundary of $\Omega_N^k$. Let us start by fixing a point $(x,0)$ with $-k<x<k$. From the assumption on $\Omega_N$ we can define the set
\[
B(N):=\Omega_N\cap B_r(x,0)=\Set{(x,y)\in B_r(x,0)|y> f_{1,N}(x)},
\]
for some $r>0$ suitably small. Then, from the standard regularity theory we deduce that
\[
\norm{u_N-u_\infty^k}_{C^2(B_{1/2}(N))}\to0,\quad\text{for }N\to+\infty.
\]
 where $B_{1/2}(N):=\Omega_N\cap B_{r/2}(x,0)$. To show $\mathcal C^2$ convergence in the whole $\Omega_\infty^{k}$ it is enough to cover the segments $(-k,k)\times\{0\}$ and $(-k,k)\times\{1\}$ with finitely many balls.\\
 Thus we have proved that for all $k\in\N$ we can find a function $u_\infty^k\in\mathcal C^\infty(\Omega_\infty^k)$ such that $u_N\to u_\infty^k$ in $\mathcal C^2(\Omega_\infty^k)$ and $u_\infty^k$ solves
\[
\begin{cases}
-\Delta u_\infty^k=\pi^2u_\infty^k&\text{in }\Omega_\infty^{k}\\
u_\infty^k=0&\text{for }y=0,1.
\end{cases}
\]
By uniqueness of the limit we have $u_\infty^{k+1}=u_\infty^k$ in $\Omega_\infty^k$, and this allows us to define a $\mathcal C^2$ function in the whole strip $\Omega_\infty$ given by
\[
u_\infty(x,y):=u_\infty^k(x,y),\quad\text{for }(x,y)\in\Omega_\infty^k,
\]
which is a solution of
\begin{equation}
\label{PBlimte}
\begin{cases}
-\Delta u_\infty=\pi^2u_\infty&\text{in }\Omega_\infty\\
u_\infty=0&\text{for }y=0,1.
\end{cases}
\end{equation}
Moreover, from the corresponding properties of $u_\infty^k$, note that $u_\infty(0,y)=0$ for all $y\in(0,1)$ and $\abs{u_\infty(\pm1,1/2)}>0$.
\end{proof}
To conclude the proof of Proposition~\ref{convcompatti} we must prove that $u_\infty(x,y)=A_0x\sin(\pi y)$ for some $A_0>0$. This is a consequence of the next lemma.
\begin{lemma}\label{conv3}
The functions $u(x,y)=Ax\sin(\pi y)$ are the unique solutions of the problem
\begin{equation}\label{c1}
\begin{cases}
-\Delta u=\pi^2u&\text{in }\Omega_\infty\\
u(0,y)=0&\text{for any }y\in[0,1]\\
u(x,0)=u(x,1)=0&\text{for any }x\in\R\\
\abs{u(x,y)}\le C(1+\abs{x})&\hbox{for some constant }C>0,
\end{cases}
\end{equation}
for any $A\in\R$.
\end{lemma}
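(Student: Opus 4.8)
The plan is to separate variables in the bounded direction $y$ using the Dirichlet eigenbasis $\{\sin(k\pi y)\}_{k\ge1}$ of the interval $(0,1)$. The fact that every $u(x,y)=Ax\sin(\pi y)$ is a solution is immediate: $u_{xx}=0$ and $-u_{yy}=\pi^2u$, so $-\Delta u=\pi^2u$; the boundary conditions $u(0,y)=0$ and $u(x,0)=u(x,1)=0$ hold by inspection; and $|u|\le|A|(1+|x|)$. So the whole content of the statement is uniqueness.

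For uniqueness, let $u$ be any solution. By elliptic regularity (cf.\ Lemma~\ref{reg}) $u$ is smooth up to the horizontal boundary $y=0,1$, so for each $x$ I can expand $u(x,\cdot)$ in the basis above and set
\[
c_k(x):=2\int_0^1u(x,y)\sin(k\pi y)\,dy,\qquad k\ge1,
\]
with $u(x,y)=\sum_{k\ge1}c_k(x)\sin(k\pi y)$ in $L^2(0,1)$. The key step is to produce an ordinary differential equation for each coefficient. Differentiating twice under the integral sign (legitimate since $u\in\mathcal C^2$ on the compact $y$-segment) and using $u_{xx}=-u_{yy}-\pi^2u$, I integrate the $u_{yy}$ contribution by parts twice; the boundary terms vanish because both $\sin(k\pi y)$ and $u(x,\cdot)$ are zero at $y=0,1$. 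This yields
\[
c_k''(x)=\pi^2(k^2-1)\,c_k(x),\qquad k\ge1.
\]

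It remains to feed in the last two conditions. The growth bound gives $|c_k(x)|\le2C(1+|x|)$, so every $c_k$ is at most linear. For $k\ge2$ the equation has nonzero characteristic exponents $\pm\pi\sqrt{k^2-1}$, hence its solutions are combinations of $e^{\pm\pi\sqrt{k^2-1}\,x}$; linear growth as $x\to\pm\infty$ forces $c_k\equiv0$. For $k=1$ the equation reads $c_1''=0$, so $c_1(x)=a_1x+b_1$, and the condition $u(0,y)=0$ forces $c_1(0)=0$, i.e.\ $b_1=0$. Reassembling the single surviving Fourier mode gives $u(x,y)=a_1x\sin(\pi y)$, which is the asserted form with $A=a_1$.

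The only delicate points are the termwise operations: differentiation under the integral sign and the two integrations by parts need $u\in\mathcal C^2$ up to $\{y=0,1\}$, which is exactly the regularity already available, while recovering $u$ from its surviving mode uses only $L^2$-completeness of $\{\sin(k\pi y)\}$ and the continuity of $u$. Past that the argument is a routine constant-coefficient ODE analysis, so I do not expect a serious obstacle beyond this bookkeeping.
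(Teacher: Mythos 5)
Your proof is correct and follows essentially the same route as the paper: expand $u(x,\cdot)$ in the sine basis, derive the ODE $c_k''=\pi^2(k^2-1)c_k$, use the linear growth bound on both ends of the strip to kill all modes $k\ge2$, and use $u(0,y)=0$ to eliminate the constant term in the $k=1$ mode. The only difference is cosmetic: the paper cites \cite[Lemma 6]{gj4} for the ODE satisfied by the coefficients, whereas you derive it explicitly via differentiation under the integral and two integrations by parts, which is exactly the bookkeeping hidden in that reference.
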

\begin{proof}
Here we follow~\cite[Lemma 6]{gj4}.
Let $u(x,y)$ be a solution to~\ref{c1}. Then for each fixed $x$ its Fourier series is given by
\[
u(x,y)=\sum_{j=1}^\infty A_j(x)\sin(j\pi y),
\]
where
\begin{equation}
\label{defAj}
A_j(x):=2\int_0^1u(x,t)\sin(j\pi t)\,dt,
\end{equation}
that is $A_1(x)=c_1x+d_1$ and
\[
A_j(x)=c_je^{-\sqrt{j^2-1}\pi x}+d_je^{\sqrt{j^2-1}\pi x},\quad\text{for }j\ge2,
\]
with $c_j,d_j\in\R$ for all $j\ge1$, see~\cite[Lemma 6]{gj4} for more details.

Then we evaluate~\ref{defAj} for $x=0$ and taking into account that $u(0,y)=0$ for all $y\in[0,1]$ we have
\[
d_1=A_1(0)=2\int_0^1u(0,y)\sin(\pi y)\,dy=0,
\]
and
\begin{equation}
\label{cj}
c_j+d_j=A_j(0)=2\int_0^1u(0,y)\sin(j\pi y)\,dy=0,
\end{equation}
for $j\ge2$.

By the definition of $A_j(x)$ and since $u$ has growth at most linear we have that $d_j=0$ for all $j\ge2$. Hence~\ref{cj} implies $c_j=0$ for all $j\ge2$ and then
\[
u(x,y)=\sum_{j=1}^\infty A_j(x)\sin(j\pi y)=A_1(x)\sin(\pi y)=(c_1x+d_1)\sin(\pi y)=c_1x\sin(\pi y),
\]
and the claim follows.
\end{proof}
Now we are in the position to give the proof of Proposition~\ref{convcompatti}.
\begin{proof}[Proof of Proposition \ref{convcompatti}]
By Lemma~\ref{convcompatti2} $u_N$ converges up to a subsequence to $u_\infty$, let us show that $u_\infty(x,y)=A_0x\sin(\pi y)$. First we observe that from inequality~\ref{stimasublineare} in Lemma~\ref{pippo} we know that $u_\infty$ has growth at most linear for $x\to\pm\infty$. Hence Lemma~\ref{conv3} applies and so $u_\infty(x,y)=Ax\sin(\pi y)$. Finally $A=A_0=u_\infty(1,1/2)>0$. To conclude the proof we need to show that, up to renormalize some $u_N$ the convergence holds for the whole sequence. By contradiction, assume that we can find a subsequence $(u_{N_m})_m\subset(u_{N})_N$ not converging to $u_\infty$ and $C>0$ such that
\[
\norm{u_{N_m}-A_0x\sin(\pi y)}_{L^\infty(\Omega_{N_m}\cap\{-k< x< k\})}\ge C.
\]
Now, we can apply Lemma~\ref{convcompatti2}, and in turn Lemma~\ref{conv3}, to the sequence $(u_{N_m})_m$ to find that, up to subsequences
\[
\norm{u_{N_m}-A_1x\sin(\pi y)}_{L^\infty(\Omega_{N_m}\cap\{-k< x< k\})}\to 0,\quad\text{for }m\to+\infty,
\]
for some $A_1>0$. Hence, up to multiply $u_{N_m}$ by $A_0/A_1$ we get $u_{N_m}\to u_\infty$, a contradiction.
\end{proof}
\begin{rmk}
\label{rmk1b}
A consequence of~\ref{GJ:conv1b} is that  $\nabla u\not=0$ in $\Omega_N\cap\{-1<x<1\}$. Note also that by the previous lemmas it is possible to deduce that in $\Lambda_N\cap\partial\Omega_N$ there are two nondegenerate saddle points.
Indeed, from Theorem~\ref{nodalline} the nodal line is contained in $\Omega_N\cap\{-1<x<1\}$ and~\cite[Lemma 1.2]{l2} tells us that the two points in $\Lambda_N\cap\partial\Omega_N$ are critical points. Moreover, setting $\Lambda_N\cap\partial\Omega_N=\set{q_1,q_2}$ we have $q_1=(o(1),1+o(1))$ and $q_2=(o(1),o(1))$ and then from Proposition~\ref{convcompatti}, writing $q_i:=(x_{q_i},y_{q_i})$, we get for $i=1,2$
\[
\partial_{xx}u_N(q_i)=\partial_{xx}\left(A_0x\sin(\pi y_{q_i})\right)+o(1)=0+o(1)=o(1),
\]
and similarly one has
\begin{align*}
\partial_{xy}u_N(q_i)&=\partial_{xy}\left(A_0x\sin(\pi y_{q_i})\right)+o(1)\\
	&=A_0\pi\cos(\pi y_{q_i})+o(1)=(-1)^iA_0\pi+o(1),\\
\partial_{yy}u_N(q_i)&=\partial_{yy}\left(A_0x_{q_i}\sin(\pi y_{q_i})\right)+o(1)=-A_0\pi^2x_{q_i}\sin(\pi y_{q_i})+o(1)=o(1).
\end{align*}
This yields to
\[
\det\hess_u(q_i)=o(1)-((-1)^iA_0\pi)^2<0,
\]
and the claim follows.
\end{rmk}

\section{The topological argument}
\label{sez3}

Up to the end of this section let us write $u$ instead of $u_N$ for brevity.
Let us recall some notations and some results from~\cite{cc} and~\cite{dgm}.

For every $\theta\in[0,\pi)$ we write $e_{\theta}:=(\cos\theta,\sin\theta)$ and we set
\begin{align*}
u_{\theta}&:=\scal{\nabla u, e_{\theta}}=\frac{\partial u}{\partial e_{\theta}},\\
N_{\theta}&:=\set{p\in\overline{\Omega}_N|u_{\theta}(p)=0}\,\,(\hbox{the nodal set of }u_\theta), \\
M_{\theta}&:=\set{p\in N_{\theta}|\nabla u_{\theta}(p)=\orig}\,\,(\hbox{the singular points of }u_\theta).
\end{align*}
Let us point out that $u_\theta$ clearly solves $-\Delta u_\theta=\lambda_{2,N} u_\theta$ in $\Omega_N$.
Moreover, if the set $\set{u=c}$ is smooth then its curvature is given by
\[
\curv:=-\frac{u_{yy}u_{x}^2-2u_{xy}u_{x}u_{y}+u_{xx}u_{y}^2}{|\nabla u|^3}.
\]
Consider
\[
\Omega_N':=\Set{(x,y)\in\Omega_N|x>1/2}.
\]
In the next proposition we recall some properties of the sets $M_{\theta}$ and $N_{\theta}$ in $\Omega_N'$.
\begin{prop}
\label{propd1}
We have that for every $\theta\in[0,\pi)$,
\begin{enumerate}[(i)]
\item around any $p\in (N_{\theta}\cap\Omega_N')\setminus M_{\theta}$ the nodal set $N_{\theta}$ is a smooth curve;
\item if $p\in M_{\theta}\cap\Omega_N'$, then $N_{\theta}$ consists of at least two smooth curves intersecting transversally at $p$;
\item from the domain monotonicity for Dirichlet eigenvalues there is no nonempty domain $H\subset\Omega_N'$ such that $\partial H\subset N_{\theta}$ (where the boundary of $H$ is considered as a subset of $\R^{2}$);
\item if $p\in\big( N_{\theta}\cap\partial(\Omega_N'\cap\Omega_N)\big)\setminus M_\theta$ by the implicit function theorem one has that around $p$, $N_{\theta}$ is a smooth curve intersecting $\partial\Omega'_N$ transversally  in $p$. 
\end{enumerate}
\end{prop}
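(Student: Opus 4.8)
The plan is to derive all four items from one structural observation: since the equation $-\Delta u=\lambda_{2,N}u$ is linear with constant coefficient, the directional derivative $u_\theta=\scal{\nabla u,e_\theta}$ again solves $-\Delta u_\theta=\lambda_{2,N}u_\theta$ in $\Omega_N$, so the standard local and global theory of nodal sets of solutions of linear elliptic eigenvalue equations applies. Before using it I would record that $u_\theta\not\equiv0$ on $\Omega_N'$: by Proposition~\ref{convcompatti} one has $u_\theta\to A_0\big(\cos\theta\,\sin(\pi y)+\pi x\,\sin\theta\,\cos(\pi y)\big)$ in $\mathcal C^1$ on compact sets, and this limit is not identically zero for any $\theta\in[0,\pi)$ because $A_0\neq0$; hence for $N$ large $u_\theta$ is a genuine nontrivial solution, which is exactly what makes $N_\theta$ one-dimensional and is implicitly needed throughout.

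For (i) the hypothesis $p\notin M_\theta$ means $\nabla u_\theta(p)\neq\orig$, so the implicit function theorem applied to $u_\theta$ shows that $N_\theta=\set{u_\theta=0}$ is a smooth embedded curve in a neighbourhood of $p$. For (ii), at $p\in M_\theta\cap\Omega_N'$ both $u_\theta(p)=0$ and $\nabla u_\theta(p)=\orig$, so $u_\theta$ vanishes at $p$ to some finite order $n\ge2$. By the classical local description of planar solutions of $-\Delta w=Vw$ (in the spirit of Bers and Hartman--Wintner, as exploited in~\cite{cc,dgm}), the leading term of $u_\theta$ at $p$ is a homogeneous harmonic polynomial of degree $n$, whose zero set is $n$ lines through the origin meeting at equal angles $\pi/n$. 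Transporting this picture back to $u_\theta$ by the associated $C^1$ change of variables, one gets that $N_\theta$ near $p$ consists of $n\ge2$ smooth arcs through $p$ with distinct tangent directions, in particular at least two crossing transversally, which is (ii).

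For (iii) I would argue by contradiction. If a nonempty domain $H\subset\Omega_N'$ satisfied $\partial H\subset N_\theta$, then $u_\theta$ would restrict to a Dirichlet eigenfunction on $H$ with eigenvalue $\lambda_{2,N}$, so that $\lambda_1(H)\le\lambda_{2,N}$. On the other hand, Theorem~\ref{nodalline} confines the nodal line of $u$ to $\{\abs{x}<1/2\}$ for $N$ large, whence $u>0$ on all of $\Omega_N'$ and $\Omega_N'$ lies strictly inside the positive nodal domain $\set{u>0}$, whose first Dirichlet eigenvalue equals $\lambda_{2,N}$. Since $H\subset\Omega_N'\subsetneq\set{u>0}$ with complement of positive measure, strict domain monotonicity of $\lambda_1$ gives $\lambda_1(H)>\lambda_{2,N}$, a contradiction.

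For (iv) smoothness of $N_\theta$ near $p$ is again the implicit function theorem, since $p\notin M_\theta$. The substantive point is transversality to $\partial\Omega_N'$. On the Dirichlet part of $\partial\Omega_N'$ one has $u\equiv0$, so by the Hopf lemma $\nabla u(p)\neq\orig$ is parallel to the inner normal; then $u_\theta(p)=\scal{\nabla u(p),e_\theta}=0$ forces $e_\theta$ to be tangent to $\partial\Omega_N$ at $p$. Writing $t$ for the unit boundary tangent, transversality of $N_\theta$ reduces to $\scal{\hess_u(p)\,e_\theta,t}\neq0$, and differentiating $u\equiv0$ twice along the boundary produces the identity $u_{tt}=-\kappa\,u_\nu$ linking this quantity to the boundary curvature $\kappa$ and the nonvanishing normal derivative $u_\nu$. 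I expect this to be the main obstacle: one must combine this curvature identity, the convexity of $\Omega_N$, and the Hopf lemma to rule out tangency, with particular care at boundary points where $\kappa$ degenerates; the straight portion $\set{x=1/2}$ of $\partial\Omega_N'$ is treated separately and, if necessary, by perturbing the cutting level so that it meets $N_\theta$ transversally.
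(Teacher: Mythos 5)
The paper itself proves this proposition only by the citation ``See~\cite{cc}'', so the benchmark is the standard Cabr\'e--Chanillo/\cite{dgm} argument, and your items (i)--(iii) reproduce it correctly and along the same route: $u_\theta$ solves $-\Delta u_\theta=\lambda_{2,N}u_\theta$; (i) is the implicit function theorem; (ii) is the Bers/Hartman--Wintner local expansion, whose leading part at a singular zero is a homogeneous harmonic polynomial of degree $n\ge2$ with $n$ nodal lines through the point at equal angles; and (iii) is the eigenvalue comparison: $\partial H\subset N_\theta$ forces $\lambda_1(H)\le\lambda_{2,N}$ (using, as is standard, that $u_\theta$ restricted to $H$ lies in $H^1_0(H)$), while Theorem~\ref{nodalline} gives $u>0$ on $\Omega_N'$ for $N$ large, so $H\subset\Omega_N'\subsetneq\{u>0\}$ and strict domain monotonicity of $\lambda_1$ --- legitimate here since $\{u>0\}\setminus\Omega_N'$ contains an open set --- yields $\lambda_1(H)>\lambda_1(\{u>0\})=\lambda_{2,N}$, a contradiction. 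Your preliminary observation that $u_\theta\not\equiv0$ is also fine (and can be had even without Proposition~\ref{convcompatti}: $u_\theta\equiv0$ would make $u$ constant along lines in direction $e_\theta$, impossible for a Dirichlet eigenfunction on a bounded domain).

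Item (iv), however, you only sketch, and the two difficulties you flag should be settled differently from what you propose. First, the degenerate case $\kappa(p)=0$ does not arise in this paper's framework: strict positivity of $\curv$ on $\partial\Omega_N$ is a standing assumption, invoked verbatim in the proofs of Propositions~\ref{prop3b} and~\ref{prop1b}, where one finds exactly your identity in the form $u_{\theta\theta}(p)=u_{tt}(p)=\curv(p)\,u_\nu(p)\ne0$ (the sign discrepancy with your $u_{tt}=-\kappa u_\nu$ is only a matter of orientation conventions and is immaterial); so on $\partial\Omega_N'\cap\partial\Omega_N$ your mechanism closes the argument. Second, and more substantively, your fallback of ``perturbing the cutting level'' for the straight portion $\{x=1/2\}$ is not available: $\Omega_N'$ is fixed as $\Omega_N\cap\{x>1/2\}$ and enters as such the homotopy and degree computation of Proposition~\ref{prop1b}, so transversality must be established on that specific line. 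Note that $p\notin M_\theta$ alone does not give it: on the vertical segment transversality means precisely $\partial_y u_\theta(p)\ne0$, whereas $p\notin M_\theta$ only says $\nabla u_\theta(p)\ne\orig$, and a zero of $\partial_y u_\theta$ with $\partial_x u_\theta\ne0$ would make $N_\theta$ tangent to the line. The paper supplies the missing nondegeneracy asymptotically, in the proof of Proposition~\ref{prop3b}, from the $\mathcal C^1$ convergence of $u_\theta$ to $A_0\cos\theta\sin(\pi y)+A_0\pi x\sin\theta\cos(\pi y)$ granted by Proposition~\ref{convcompatti}: combining this with the relation $\cot\theta=-\frac{\pi}{2}\cot(\pi y)(1+o(1))$ characterizing the intersection points, one checks directly that the relevant derivative is bounded away from zero there. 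So your plan for (iv) identifies the right obstruction, but the segment part is a genuine gap in the proposal, and its correct repair is the asymptotic computation, not a perturbation of the domain.
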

\begin{proof}
See~\cite{cc}.
\end{proof}
The following result tells us that for each $\theta\in[0,\pi)$ the nodal sets of $u_\theta$ is a smooth curve without self intersection and every critical point of $u$ is nondegenerate. 
\begin{prop}
\label{prop3b}
For $N$ large enough and for every $\theta\in[0,\pi)$, the nodal set $N_{\theta}$ of the partial derivative $u_\theta$  is a smooth curve in $\overline\Omega_N'$ without self-intersection which hits $\partial\Omega_N'$ exactly at  two  points. Moreover at any critical point of $u$ in $\Omega_N'$ the Hessian matrix has rank $2$.
\end{prop}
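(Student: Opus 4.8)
The plan is to adapt the Cabr\'e--Chanillo scheme for positive, stable solutions on convex domains to the nodal domain $\Omega_N'$, on which $u=u_N>0$ for $N$ large (by Theorem~\ref{nodalline} the nodal line lies in $\{|x|<1/2\}$, so $\{x>\tfrac12\}$ sits inside the positivity set), and to reduce all three assertions to the single claim that, for $N$ large and every $\theta\in[0,\pi)$, the field $u_\theta$ has no singular points, i.e. $M_\theta\cap\overline{\Omega}_N'=\emptyset$. Granting this: property (i) of Proposition~\ref{propd1} makes $N_\theta$ a smooth curve in the interior, property (iv) makes it cross $\partial\Omega_N'$ transversally, and property (ii) forbids self-intersections, so $N_\theta$ is an embedded compact $1$-manifold with boundary, hence a finite disjoint union of arcs and circles. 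A circle would be an interior closed component bounding a region $H\subset\Omega_N'$ with $\partial H\subset N_\theta$, which property (iii) excludes; thus only arcs survive. Finally the rank statement is immediate: if $p\in\Omega_N'$ were a critical point of $u$ with $\operatorname{rank}\hess_u(p)\le1$, choosing $e_{\theta_0}\in\ker\hess_u(p)$ gives $u_{\theta_0}(p)=\scal{\nabla u(p),e_{\theta_0}}=0$ and $\nabla u_{\theta_0}(p)=\hess_u(p)\,e_{\theta_0}=\orig$, so $p\in M_{\theta_0}$, contradicting $M_{\theta_0}=\emptyset$.

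Next I would count the intersections of $N_\theta$ with $\partial\Omega_N'=\Gamma_1\cup\Gamma_2\cup\sigma$, where $\Gamma_1,\Gamma_2$ are the graphs of $f_{1,N},f_{2,N}$ over $\{x>\tfrac12\}$ and $\sigma=\{x=\tfrac12\}\cap\overline{\Omega}_N$. On $\Gamma_1,\Gamma_2$ one has $u=0$, so $\nabla u=u_\nu\nu$ with $u_\nu\neq0$ by Hopf's lemma, and therefore $u_\theta=u_\nu\scal{\nu,e_\theta}$ vanishes exactly where the outer normal is orthogonal to $e_\theta$; the convexity of $\Omega_N$, i.e. the monotonicity of the Gauss map, bounds the number of such points. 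On the flat side $\sigma$, where $u>0$, I would instead invoke Proposition~\ref{convcompatti}: since $u_N\to A_0x\sin(\pi y)$ in $\mathcal C^2$ near $\sigma$, the traces of $u_\theta$ and of $\nabla u_\theta$ are uniformly close to their explicit limits, which pins down the crossings on $\sigma$ and their transversality. Putting the two counts together should yield exactly two intersection points for every $\theta$, so the union of arcs from the previous paragraph is a single arc joining them.

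It remains to prove $M_\theta\cap\overline{\Omega}_N'=\emptyset$, which I would deduce from the same count. If $p\in M_\theta\cap\Omega_N'$, then by property (ii) of Proposition~\ref{propd1} at least two branches of $N_\theta$ meet transversally at $p$, creating at least four local ends; property (iii) prevents any end from closing into a loop inside $\Omega_N'$, so each end must reach $\partial\Omega_N'$, forcing at least four boundary intersections and contradicting the count of the previous paragraph, while boundary singular points are excluded in the same spirit via property (iv). The point I expect to be hardest is precisely this interplay: by Theorem~\ref{thm1b} the genuine critical point of $u$ in $\Omega_N'$ escapes to infinity, hence lies outside every fixed compact set and is invisible to the $\mathcal C^2$-convergence of Proposition~\ref{convcompatti}, so its nondegeneracy can only come from the global topological count, which rests on convexity and on the strict stability packaged in property (iii). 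Consequently the real work is the boundary count above, and above all making the behaviour on the free side $\sigma=\{x=\tfrac12\}$ precise and uniform in $\theta$, where the sole input is the local convergence to $u_\infty=A_0x\sin(\pi y)$.
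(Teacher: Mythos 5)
Your plan coincides with the paper's own strategy: reduce the whole statement to the two facts (a) no singular points on $N_\theta\cap\partial\Omega_N'$ and (b) $N_\theta\cap\partial\Omega_N'$ consists of exactly two points; exclude interior points of $M_\theta$ by the ``four local ends versus two boundary endpoints'' argument through items (ii)--(iii) of Proposition~\ref{propd1}; and get the rank-$2$ claim by picking $e_{\theta_0}\in\ker\hess_u(p)$, which is verbatim the paper's closing step. The two ingredients of your boundary count (tangency points on the curved part via $u_\theta=u_\nu\scal{\nu,e_\theta}$ and Hopf's lemma; the explicit limit $A_0x\sin(\pi y)$ from Proposition~\ref{convcompatti} on the segment $\{x=\tfrac12\}$) are also the paper's. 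So this is the same route, not a different one.

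However, the decisive step --- ``putting the two counts together \emph{should} yield exactly two intersection points for every $\theta$'' --- is precisely where the content of the proof lies, and it is left unproved; moreover the naive uniform count (one tangency on the curved part plus one crossing on the flat side) is \emph{false} for some $\theta$. The correct count splits into two complementary regimes governed by $N$-dependent thresholds $\delta_1(N),\delta_2(N)\to0$. For $\theta\in(\delta_1(N),\pi-\delta_2(N))$ there is exactly one tangency point $p_1$ on $\partial\Omega_N\cap\{x>1/2\}$, and $p_1\notin M_\theta$ because $u_{\theta\theta}(p_1)=\curv(p_1)u_\nu(p_1)\neq0$ --- this needs the positivity of the curvature together with Hopf, not merely ``monotonicity of the Gauss map bounds the number of such points''; on $\{x=\tfrac12\}$ the limit equation $A_0\cos\theta\sin(\pi y)+\tfrac{A_0\pi}{2}\sin\theta\cos(\pi y)=o(1)$, i.e.\ $\cot\theta=-\tfrac{\pi}{2}\cot(\pi y)(1+o(1))$, then has exactly \emph{one} root, unique by the $\mathcal C^1$ convergence. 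For $\theta\in[0,\delta_1(N))\cup(\pi-\delta_2(N),\pi)$ there is \emph{no} tangency point in $\{x>1/2\}$ (the slopes of $f_{1,N},f_{2,N}$ near $x=\tfrac12$ are $o(1)$, so near-horizontal directions are not attained there), and instead the segment carries exactly \emph{two} roots, $y_1=o(1)$ and $y_2=1+o(1)$, with $\partial_yu_\theta=\mp\tfrac{A_0\pi}{2}+o(1)\neq0$; in this regime the paper also checks, via Remark~\ref{rmk1b}, that $N_\theta\cap\Omega_N'\neq\emptyset$ so that the set does reach the line $x=\tfrac12$. Without this dichotomy --- whose thresholds degenerate as the domain flattens, so ``uniform in $\theta$'' cannot mean a single count --- the ``exactly two'' assertion is unsupported. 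Two smaller points: your aside invoking Theorem~\ref{thm1b} would be circular if it carried any logical weight, since that theorem is proved \emph{from} this proposition (here it is harmless commentary); and item (ii) of Proposition~\ref{propd1} does not by itself forbid self-intersections --- it is $M_\theta=\emptyset$ combined with item (i) that does.
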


\begin{proof}
The proof uses Proposition~\ref{propd1} jointly with Proposition~\ref{convcompatti}.

From the previous points, if we prove that\\
$a)\ M_\theta=\emptyset$ on $N_{\theta}\cap\partial\Omega'_N$,\\
and\\
$b)\ N_{\theta}\cap\partial\Omega_N'=\{p_1,p_2\}$,
\vskip0cm\noindent
we have the claim. Indeed if $a)$ and $b)$ hold then we cannot have self-intersections of $N_\theta$ otherwise $(iii)$ of Proposition~\ref{propd1} fails. So $M_\theta=\emptyset$ and this fact jointly with $(i)$ of Proposition~\ref{propd1} gives the smoothness of $N_\theta$ in $\Omega'_N$. 
In order to prove $a)$ and $b)$ we will show that the following scenario holds:
\begin{itemize}
\item If $\theta$ is far away from $0$ and $\pi$ then $N_\theta$ intersect $\partial\Om'_N$ exactly at \emph{two} points, one of them belonging to $\partial\Om_N$ and the other on the straight line $x=\frac12$.
\item If $\theta$ is close to $0$ and $\pi$ then $N_\theta$ intersect $\partial\Om'_N$ exactly at \emph{two} points, both belonging to the straight line $x=\frac12$.
\item In both cases $N_\theta$ intersect $\partial\Om'_N$ transversely.
\end{itemize}
Now let us consider the two different situations.\vskip0.2cm
\emph{Case $1$: $a)$ and $b)$ hold for $\theta$ far away from $0$ and $\pi$.}\\
From the assumptions on $\Omega_N$ and taking into account that the curvature $\curv$ is positive, there exist $\delta_i:=\delta_i(N)>0$, with $\delta_i\to0$ as $N\to+\infty$, for $i=1,2$, such that for $\theta\in(\delta_1(N),\pi-\delta_2(N))$ there exists a unique $p_1$ on $\partial\Omega_N$ with $x>1/2$ such that the tangent vector of $\partial\Omega_N'$ at $p_1$ is parallel to $e_\theta$.

 It follows that $p_1\in N_\theta$ and from $\curv>0$ we get $p_1\not\in M_\theta$. Indeed
 \[
 u_{\theta\theta}(p_1)=u_{\tangvet\tangvet}(p_1)=\curv(p_1) u_\nu(p_1)\not=0,
 \]
 where $\tangvet$ denotes the unit tangent normal vector, $\nu$ the unit exterior vector and $u_\nu(p_1)\not=0$ by the Hopf boundary lemma. Hence $p\in\big( N_{\theta}\cap\partial(\Omega_N'\cap\Omega_N)\big)\setminus M_\theta$ and $(iv)$ of Proposition~\ref{propd1} implies that $N_{\theta}$ is a smooth curve intersecting $\partial(\Omega_N'\cap\Omega_N)$ transversely  in $p_1$. 
 
Next let us show that for $\theta\in(\delta_1(N),\pi-\delta_2(N))$ and $p=(1/2,y)$ we have that $N_{\theta}$ is a singleton. 
Taking into account~\ref{GJ:conv1b}, one has
\begin{align*}
0&=u_\theta=\cos\theta\partial_xu+\sin\theta\partial_yu\\
	&=\cos\theta\partial_x\left(A_0x\sin(\pi y)\right)+\sin\theta\partial_y\left(A_0x\sin(\pi y)\right)+o(1)\\
	&=A_0\cos\theta\sin\left(\pi y\right)+A_0\frac{\pi}{2}\sin\theta\cos\left(\pi y\right)+o(1),
\end{align*}
if and only if
\[
\cot\theta=-\frac{\pi}{2}\cot(\pi y)(1+o(1)),
\]
which tells us that, for $N$ sufficiently large, there exists exactly one point $p_2=(1/2,y_\theta)$ such that $u_\theta(p_2)=0$. Uniqueness of $p_2$ follows from $\mathcal C^1$ convergence of $u_\theta$  given by Proposition~\ref{convcompatti}.  Moreover similar computations show that $p_2\not\in M_\theta$, indeed 
\begin{align*}
\partial_xu_\theta&=\cos\theta\partial_{xx}u+\sin\theta\partial_{xy}u\\
	&=\cos\theta\partial_{xx}\left(A_0x\sin(\pi y)\right)+\sin\theta\partial_{xy}\left(A_0x\sin(\pi y)\right)+o(1)\\
	&=A_0\pi\sin\theta\cos\left(\pi y\right)+o(1)\not=0,
\end{align*}
for $y\not=1/2+o(1)$. If $y=1/2+o(1)$ one has
\begin{align*}
\partial_yu_\theta&=A_0\pi\cos\theta\cos\left(\pi y\right)-A_0\frac{\pi^2}{2}\sin\theta\sin(\pi y)+o(1)\\
	&=-A_0\frac{\pi^2}{2}\sin\theta+o(1)\not=0.
\end{align*}
So $N_{\theta}\cap\partial\Omega_N'=\{p_1,p_2\}$ and $p_i\not\in M_\theta$ for $i=1,2$; hence $a)$ and $b)$ hold for $\theta\in(\delta_1(N),\pi-\delta_2(N))$.
\vskip0.2cm
\emph{Case $2$: $a)$ and $b)$ hold for $\theta$ close to $0$ and $\pi$.}\\
According to the notations of the previous case let us consider $\theta\in[0,\delta_1(N))\cup(\pi-\delta_2(N),\pi)$. So in this case either $\theta\to0$ or $\theta\to\pi$ as $N\to+\infty$.

Note that here we have that $N_\theta\cap\partial\Omega_N\cap\partial\Omega'_N=\emptyset$ and then we only have to study what happens on the straight line $x=\frac12$. Moreover, Remark~\ref{rmk1b} implies the existence of at least a critical point in $\Omega_N'$ and then $N_\theta\cap\Omega_N'\not=\emptyset$. Since there are no intersections of $N_\theta$ with $\Omega_N\cap\partial\Omega'_N$ then necessarily $N_\theta$ intersects the straight line $x=\frac12$, otherwise $\partial N_\theta$ is a closed curve contained in $\Om'_N$, a contradiction with $iii)$ in Proposition~\ref{propd1}.

Next let us study the intersection of $N_\theta$ with $x=\frac12$.
Recalling that $u(x,y)\sim A_0x\sin(\pi y)$ we get that $u_\theta(1/2,y)=0$ if and only if
\[
0=u_\theta(1/2,y)=A_0\underbrace{\cos\theta}_{\to\pm1}\sin(\pi y)+\frac{A_0}2\underbrace{\sin\theta}_{=o(1)}\cos(\pi y)+o(1),
\]
that implies
\[
\sin\left(\pi y\right)+o(1)=0,
\]
 and hence we have two solutions $y_1=o(1)$ and $y_2=1+o(1)$.
Observe that the last equation admits \emph{exactly} two solution by the $\mathcal C^1$ convergence of $u_\theta$ to $\partial_\theta\left(A_0x\sin(\pi y)\right)$.\\
Finally let us show that both points $p_1=\left(\frac12,y_1\right)$ and  $p_2=\left(\frac12,y_2\right)$ do not belong to $M_\theta$. Indeed, for $N$ large enough
\[
\partial_yu_\theta(p_1)=\frac{A_0}2\pi+o(1)\ne0\quad\hbox{and}\quad\partial_yu_\theta(p_2)=-\frac{A_0}2\pi+o(1)\ne0,
\]
which shows that $p_1,p_2\notin M_\theta$ and as before the implicit function theorem tells us that if $x=1/2$ the nodal set $N_\theta$ is a smooth curve intersecting transversely the line $\{x=1/2\}$ at $p_1$ and $p_2$. This ends the \emph{Case $2$}.
\vskip0.2cm
Hence we proved $a)$ and $b)$ for all $\theta\in[0,\pi)$.\
\vskip0.2cm
Finally at any critical point of $u$ we have that the Hessian matrix is nondegenerate otherwise we deduce that there exists $\theta$ such that $M_\theta\ne\emptyset$ contradicting $a)$.
\end{proof}

For $u$ solution of~\ref{PB1b}, consider the vector field $T:\overline{\Omega_N'}\to\R^{2}$ given by
\[
T(q):=(u_{yy}(q)u_{x}(q)-u_{xy}(q)u_{y}(q),u_{xx}(q)u_{y}(q)-u_{xy}(q)u_{x}(q)),\quad q\in\Omega_N'.
\]
By the smoothness of $u$ we have that $T$ is of class $\mathcal{C}^{1}$. In next lemmas we recall some important properties of the vector field $T$, proved in~\cite{dgm}.

\begin{lemma}[{\cite[Lemma 2]{dgm}}]
\label{lemma1}
If $q\in\Omega_N'$ is such that $T(q)=\orig$ then either
\begin{equation*}
q\hbox{ is a critical point for $u$},
\end{equation*}
or
\begin{equation*}
\det \hess\big(u(q)\big)=0\hbox{ and for $\cos\theta=\frac{u_x(q)}{\sqrt{u_x^2(q)+u_y^2(q)}}$ we have that $q\in M_{\theta}$}.
\end{equation*}
\end{lemma}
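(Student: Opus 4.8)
The plan is to read $T$ as a purely algebraic expression in $\nabla u$ and $\hess u$ and to exploit the $\pi/2$-rotation that links $T$ to the action of the Hessian on the level lines of $u$. First I would record the identity $T(q)=\operatorname{adj}(\hess u(q))\,\nabla u(q)$, where $\operatorname{adj}$ denotes the adjugate (cofactor) matrix: expanding the right-hand side reproduces the two components of $T$ verbatim. Writing $(\nabla u)^\perp:=(-u_y,u_x)$ and letting $J$ denote rotation by $\pi/2$, a one-line computation gives the equivalent form
\[
J\,T(q)=\hess u(q)\,\big(\nabla u(q)\big)^\perp ,
\]
so that $T(q)=\orig$ holds if and only if the vector $(\nabla u(q))^\perp$ is annihilated by $\hess u(q)$.

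From here I would split according to whether $\nabla u(q)$ vanishes. If $\nabla u(q)=\orig$, then $q$ is by definition a critical point of $u$ and the first alternative holds. If instead $\nabla u(q)\neq\orig$, then $(\nabla u(q))^\perp$ is a nonzero vector in the kernel of $\hess u(q)$, whence $\hess u(q)$ is singular and $\det\hess(u(q))=0$. To exhibit the angle, I would take $\theta\in[0,\pi)$ so that $e_\theta$ is the unit direction orthogonal to $\nabla u(q)$ (equivalently, tangent to the level line of $u$ through $q$), which is the angle recorded in the statement. With this choice $u_\theta(q)=\scal{\nabla u(q),e_\theta}=0$, so $q\in N_\theta$; and since $\nabla u_\theta(q)=\hess u(q)\,e_\theta$ is a scalar multiple of $\hess u(q)(\nabla u(q))^\perp=\orig$, we obtain $\nabla u_\theta(q)=\orig$, that is $q\in M_\theta$. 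This gives the second alternative.

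The statement is thus essentially a computation, and I expect no deep obstacle: once the rotation identity $J\,T=\hess u\,(\nabla u)^\perp$ is available, both $u_\theta(q)=0$ and $\nabla u_\theta(q)=\orig$ are immediate. The only point demanding care is the bookkeeping of $\theta$, namely recognizing that the relevant direction is the one tangent to the level line (orthogonal to $\nabla u$) rather than the gradient direction, and fixing the branch of $\theta$ in $[0,\pi)$ so that $e_\theta$ is unambiguously determined.
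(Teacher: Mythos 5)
Your argument is correct, and there is nothing in the paper to compare it with line by line: the paper does not prove this lemma, it only quotes it from~\cite{dgm} (``In next lemmas we recall some important properties of the vector field $T$, proved in~\cite{dgm}''). Your route is the natural computational one and almost certainly the intended one. The two key identities check out: expanding $\operatorname{adj}(\hess u)\,\nabla u$ reproduces both components of $T$, and with $J$ the rotation by $\pi/2$ and $(\nabla u)^{\perp}=(-u_y,u_x)$ one verifies directly
\[
J\,T(q)=\hess u(q)\,\big(\nabla u(q)\big)^{\perp},
\]
so $T(q)=\orig$ iff $(\nabla u(q))^{\perp}\in\ker\hess u(q)$. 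Your dichotomy is then complete: either $\nabla u(q)=\orig$, or $(\nabla u(q))^{\perp}$ is a nonzero kernel vector, forcing $\det\hess u(q)=0$, and taking $e_\theta\parallel(\nabla u(q))^{\perp}$ gives $u_\theta(q)=0$ and $\nabla u_\theta(q)=\hess u(q)\,e_\theta=\orig$, i.e.\ $q\in M_\theta$; this covers even the degenerate case $\hess u(q)=0$.

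One point you should not gloss over: your parenthetical claim that your $\theta$ ``is the angle recorded in the statement'' is not literally true. With the paper's convention $e_\theta=(\cos\theta,\sin\theta)$ and $\theta\in[0,\pi)$ (hence $\sin\theta\ge0$), the printed normalization $\cos\theta=u_x(q)/\sqrt{u_x^2(q)+u_y^2(q)}$ makes $e_\theta$ essentially the \emph{gradient} direction, not the tangent one: for instance $u_x(q)=0$ gives $\theta=\pi/2$ and $u_\theta(q)=u_y(q)\ne0$, so $q\notin N_\theta$ and the second alternative as printed cannot hold. Since $M_\theta\subset N_\theta$, the condition $u_\theta(q)=\langle\nabla u(q),e_\theta\rangle=0$ forces $e_\theta\perp\nabla u(q)$, which is exactly your choice (up to the $[0,\pi)$ normalization this means $\sin\theta=\pm u_x/|\nabla u|$, $\cos\theta=\mp u_y/|\nabla u|$, equivalently $\tan\theta=-u_x/u_y$); the formula in the statement is evidently a transcription slip from~\cite{dgm}, consistent e.g.\ with a convention in which $e_\theta$ denotes the rotated vector $(-\sin\theta,\cos\theta)$. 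So your proof establishes the (correctly read) statement; just replace the assertion that your angle agrees with the printed formula by the observation that the printed formula should be corrected.
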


From now if $q$ is an isolated zero of $T$, for $r>0$ small enough, we denote by $\mathrm{ind}(T,q):=\deg\big(T,B(q,r),\orig\big)$ where $deg$ denotes the standard Brower degree. 
\begin{lemma}[{\cite[Lemma 3]{dgm}}]
\label{lemma2}
Let $q\in\Omega_N'$ be such that $T(q)=\orig$. Then we have that
\begin{enumerate}[(i)]
\item if  $q$ is a nondegenerate critical point for $u$, then $\mathrm{ind}(T,q)=1$;
\item if $q$ is a singular point belonging to $M_\theta$ for some $\theta\in[0,\pi)$ and it is a nondegenerate critical point for $u_\theta$  then $\mathrm{ind}(T,q)=-1$.
\end{enumerate}
\end{lemma}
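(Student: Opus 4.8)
The plan is to exploit the algebraic structure of $T$. Writing the Hessian $\hess u=\begin{pmatrix} u_{xx} & u_{xy}\\ u_{xy} & u_{yy}\end{pmatrix}$ and the gradient $\nabla u=(u_x,u_y)$, one checks directly that
\[
T=\mathrm{adj}(\hess u)\,\nabla u,\qquad \mathrm{adj}(\hess u)=\begin{pmatrix} u_{yy} & -u_{xy}\\ -u_{xy} & u_{xx}\end{pmatrix},
\]
where $\mathrm{adj}$ is the adjugate (cofactor) matrix, so that $\mathrm{adj}(\hess u)\,\hess u=\det(\hess u)\,I$. Since the adjugate is multiplicative and a rotation $R$ acts by $\hess u\mapsto R\,\hess u\,R^{T}$ and $\nabla u\mapsto R\,\nabla u$, a short computation gives $T\mapsto R\,T$; hence $\mathrm{ind}(T,q)$ is invariant under rotations of the coordinates. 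In each case I would show that the Jacobian $DT(q)$ is invertible, so that $\mathrm{ind}(T,q)=\operatorname{sign}\det DT(q)$ by the standard degree formula at a nondegenerate zero.

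\emph{Case (i).} Here $\nabla u(q)=\orig$ and $\det\hess u(q)\ne0$. Differentiating $T=\mathrm{adj}(\hess u)\,\nabla u$ by the product rule, the term in which $\mathrm{adj}(\hess u)$ is differentiated is multiplied by $\nabla u(q)=\orig$ and drops out, while the surviving term is $\mathrm{adj}(\hess u)\,\hess u$ evaluated at $q$. Thus $DT(q)=\mathrm{adj}(\hess u(q))\,\hess u(q)=\det(\hess u(q))\,I$, whose determinant is $(\det\hess u(q))^{2}>0$. Therefore $DT(q)$ is invertible with positive determinant and $\mathrm{ind}(T,q)=1$.

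\emph{Case (ii).} By Lemma~\ref{lemma1} we have $\nabla u(q)\ne\orig$, $\det\hess u(q)=0$, and $q\in M_\theta$, i.e. $u_\theta(q)=0$ and $\nabla u_\theta(q)=\orig$. Using the rotation invariance above I would rotate so that $e_\theta=(0,1)$; then $u_\theta=u_y$, the condition $q\in M_\theta$ reads $u_y(q)=u_{xy}(q)=u_{yy}(q)=0$, and $a:=u_x(q)\ne0$ since $q$ is not a critical point of $u$. Substituting these vanishing relations into the partial derivatives of the two components of $T$ (which bring in the third derivatives of $u$) yields
\[
DT(q)=a\begin{pmatrix} u_{xyy} & u_{yyy}\\ -u_{xxy} & -u_{xyy}\end{pmatrix}(q),\qquad
\det DT(q)=a^{2}\bigl(u_{xxy}u_{yyy}-u_{xyy}^{2}\bigr)(q)=a^{2}\det\hess u_\theta(q).
\]
The decisive point is that $u_\theta$ solves $-\Delta u_\theta=\lambda_{2,N}u_\theta$, so at the nodal point $q$ one has $\operatorname{tr}\hess u_\theta(q)=\Delta u_\theta(q)=-\lambda_{2,N}u_\theta(q)=0$. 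A symmetric $2\times2$ matrix of vanishing trace has eigenvalues $\pm\mu$ and determinant $-\mu^{2}\le0$; since $q$ is a \emph{nondegenerate} critical point of $u_\theta$ we have $\mu\ne0$, whence $\det\hess u_\theta(q)<0$. Consequently $\det DT(q)=a^{2}\det\hess u_\theta(q)<0$, $DT(q)$ is invertible, and $\mathrm{ind}(T,q)=\operatorname{sign}\det DT(q)=-1$.

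I expect the only delicate part to be the bookkeeping in \emph{Case (ii)}: one must differentiate the two quadratic (in the derivatives of $u$) components of $T$, insert the relations $u_y=u_{xy}=u_{yy}=0$ at $q$, and recognize that the resulting matrix factors through $\hess u_\theta$. Once the identity $\det DT(q)=a^{2}\det\hess u_\theta(q)$ is established, the trace-free property forced by the eigenvalue equation converts nondegeneracy of $u_\theta$ at $q$ into the sign $-1$, which is exactly the asserted index.
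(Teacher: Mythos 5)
Your proof is correct, and it is essentially the argument behind the cited result: the paper itself gives no proof of this lemma (it is quoted verbatim from~\cite{dgm}), and the proof there likewise linearizes $T$ at its zero --- in case (i) via the identity $T=\mathrm{adj}(\hess u)\nabla u$, which at a critical point gives $DT(q)=\mathrm{adj}(\hess u(q))\hess u(q)=\det(\hess u(q))\,I$ and hence $\det DT(q)=(\det\hess u(q))^2>0$, and in case (ii) by rotating so that $u_\theta=u_y$ and using that $u_\theta$ solves $-\Delta u_\theta=\lambda_{2,N}u_\theta$ together with $u_\theta(q)=0$ to make $\hess u_\theta(q)$ trace-free, so that nondegeneracy forces $\det\hess u_\theta(q)<0$ and $\det DT(q)=u_x(q)^2\det\hess u_\theta(q)<0$. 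The only point worth making explicit is that in (ii) the hypothesis ``singular point'' must be read, consistently with the second alternative of Lemma~\ref{lemma1} (whose very definition of $\theta$ presupposes $\nabla u(q)\ne\orig$), as excluding critical points of $u$ --- otherwise $a:=u_x(q)=0$ and $DT(q)$ would vanish identically --- and with that reading your entries for $DT(q)$, the factorization $\det DT(q)=a^2\det\hess u_\theta(q)$, and the sign bookkeeping are exactly right.
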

Next corollary was proved in~\cite[Corollary 1]{dgm} but we prefer to repeat here the proof.
\begin{cor}[{\cite[Corollary 1]{dgm}}]
\label{cor1}
Let $D\subset\overline{\Omega_N'}$ be such that $M_{\theta}\cap D=\emptyset$ for all $\theta\in[0,\pi)$ and $\orig\not\in T(\partial D)$. If $\deg(D,T,\orig)=1$, then $u$ has exactly one critical point in $D$ which is a maximum with negative definite Hessian.
\end{cor}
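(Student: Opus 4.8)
The plan is to prove Corollary~\ref{cor1} by combining the degree hypothesis with the sign information provided by Lemmas~\ref{lemma1} and~\ref{lemma2}. First I would observe that, by hypothesis, $\orig\notin T(\partial D)$, so the Brouwer degree $\deg(D,T,\orig)$ is well defined and equals $1$. Since $T$ is of class $\mathcal C^1$ and $\orig$ is a regular value (to be verified below), the set $T^{-1}(\orig)\cap D$ is finite, and by the additivity/excision property of the degree we have
\[
1=\deg(D,T,\orig)=\sum_{q\in T^{-1}(\orig)\cap D}\mathrm{ind}(T,q).
\]
Thus everything reduces to classifying the zeros of $T$ inside $D$ and computing their individual indices.

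Next I would classify those zeros using Lemma~\ref{lemma1}. If $q\in D$ satisfies $T(q)=\orig$, then either $q$ is a critical point of $u$, or $\det\hess(u(q))=0$ together with $q\in M_\theta$ for the angle $\theta$ determined by $\cos\theta=u_x(q)/\sqrt{u_x^2(q)+u_y^2(q)}$. The crucial point is that the hypothesis $M_\theta\cap D=\emptyset$ for every $\theta\in[0,\pi)$ rules out the second alternative entirely: no zero of $T$ in $D$ can be a singular point of any $u_\theta$. Hence every zero of $T$ in $D$ is a critical point of $u$. Moreover, again because $M_\theta\cap D=\emptyset$ for all $\theta$, Proposition~\ref{prop3b} guarantees that at each such critical point the Hessian has rank $2$, i.e.\ the critical point is nondegenerate; this also confirms that $\orig$ is a regular value of $T$, so the zeros are isolated and finitely many.

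With every zero of $T$ in $D$ being a nondegenerate critical point of $u$, part $(i)$ of Lemma~\ref{lemma2} gives $\mathrm{ind}(T,q)=+1$ for each of them. Plugging this into the index-sum formula forces
\[
1=\sum_{q\in T^{-1}(\orig)\cap D}1=\#\big(T^{-1}(\orig)\cap D\big),
\]
so there is exactly one such $q$, and it is a nondegenerate critical point of $u$. To finish I would identify the \emph{type} of this critical point: since the convergence in Proposition~\ref{convcompatti} shows $u\sim A_0x\sin(\pi y)$ with $A_0>0$, on $D\subset\overline{\Omega_N'}$ (where $x>1/2$ and $u>0$) the only possible nondegenerate critical point compatible with positivity, vanishing boundary values, and $\det\hess(u(q))>0$ is a strict local maximum, whence the Hessian is negative definite there.

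I expect the main obstacle to be the rigorous exclusion of the degenerate scenario in Lemma~\ref{lemma1} and the attendant verification that $\orig$ is a regular value—that is, making fully precise that the hypothesis $M_\theta\cap D=\emptyset$ for \emph{all} $\theta$ simultaneously kills the second alternative while also delivering nondegeneracy of the surviving critical point; the index bookkeeping and the final maximum/negative-definiteness classification are then routine consequences.
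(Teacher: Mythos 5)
Your counting argument reproduces the paper's proof step for step, and that part is correct: the hypothesis $\orig\notin T(\partial D)$ makes the degree well posed; $M_\theta\cap D=\emptyset$ for all $\theta$ kills the second alternative of Lemma~\ref{lemma1} \emph{and} forces nondegeneracy of every critical point in $D$, so the zeros of $T$ are isolated, each has index $+1$ by Lemma~\ref{lemma2}$(i)$, and $1=\deg(D,T,\orig)=\sum_q\mathrm{ind}(T,q)=\sharp\{\text{critical points of }u\text{ in }D\}$. (Your detour through Proposition~\ref{prop3b} for nondegeneracy is legitimate but unnecessary: if $q$ were a degenerate critical point, then for $e_\theta$ a null direction of the Hessian one has $u_\theta(q)=\scal{\nabla u(q),e_\theta}=0$ and $\nabla u_\theta(q)=\hess u(q)\,e_\theta=\orig$, i.e. $q\in M_\theta$, which the hypothesis forbids directly; this is how the paper argues.)

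The flaw is in your final classification step: you assert $\det\hess u(q)>0$, but nothing in the degree argument delivers this. At any nondegenerate critical point $q$ one computes, using $\nabla u(q)=\orig$, that $DT(q)=\det\hess u(q)\cdot I$, hence $\mathrm{ind}(T,q)=\mathrm{sgn}\big((\det\hess u(q))^2\big)=+1$ for maxima, minima and saddles alike — this is precisely why Lemma~\ref{lemma2}$(i)$ carries no sign hypothesis, so index bookkeeping cannot distinguish a maximum from a saddle. What actually rules out the other types is the equation and the boundary behavior: a nondegenerate minimum is impossible since $\Delta u(q)=-\lambda_{2,N}u(q)<0$ where $u>0$, so the Hessian has negative trace; and a saddle is excluded by noting that $u$ attains its positive supremum over $\overline D$ at an \emph{interior} point (in the application $u=0$ on $\partial D\cap\partial\Omega_N$, while $u_x>0$ on the segment $\{x=1/2\}$ by Proposition~\ref{convcompatti}), which is then a critical point and hence must coincide with the unique one; a nondegenerate interior maximum has negative definite Hessian. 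To be fair, the paper's own proof is equally terse here ("which gives the claim"), but since you offered a justification, it should be a valid one, and "$\det\hess u(q)>0$ by the index" is not.
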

\begin{proof}
Since $\orig\not\in T(\partial D)$ the degree of $T$ is well posed. Moreover since $M_{\theta}\cap D=\emptyset$ we have no singular points and moreover all critical points are nondegenerate. So we have finitely many critical points and
\[
1=\deg(D,T,\orig)=\sum_{q\in\set{\text{critical points of }u}}\mathrm{ind}(T,q)=\sharp\set{\hbox{critical points of }u},
\]
which gives the claim.
\end{proof}

Next we prove the uniqueness of critical point in $\Omega_N'$.

\begin{prop}
\label{prop1b}
For $N$ large enough $u_N$ has exactly one critical point in the set $\Omega_N'$. In particular it is a nondegenerate maximum point.
\end{prop}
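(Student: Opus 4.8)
The plan is to deduce Proposition~\ref{prop1b} from Corollary~\ref{cor1} applied to the set $D=\Omega_N'$. To this end I must verify the three hypotheses of that corollary: that $M_\theta\cap\Omega_N'=\emptyset$ for every $\theta\in[0,\pi)$, that $\orig\notin T(\partial\Omega_N')$, and finally that $\deg(\Omega_N',T,\orig)=1$. The first hypothesis is exactly the content of Proposition~\ref{prop3b}, which for $N$ large guarantees that there are no singular points $M_\theta$ in $\overline{\Omega_N'}$ and that every critical point of $u$ there is nondegenerate; so this step is free.

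For the second hypothesis I would argue that $T$ cannot vanish on $\partial\Omega_N'$. By Theorem~\ref{nodalline} the nodal line lies in $\{\abs{x}<C_0/N\}$, so for $N$ large $u>0$ on all of $\Omega_N'$; hence on the boundary arc $\partial\Omega_N\cap\{x>1/2\}$ Hopf's lemma gives $\nabla u\ne\orig$, while on the segment $\{x=1/2\}$ Remark~\ref{rmk1b} gives $\nabla u\ne\orig$. Thus there are no critical points of $u$ on $\partial\Omega_N'$, and since $M_\theta=\emptyset$ there as well, Lemma~\ref{lemma1} forces $T\ne\orig$ on $\partial\Omega_N'$.

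The heart of the argument is the degree computation, which I would carry out by a straight-line homotopy of $T$ to the field $q\mapsto q-(x_0,y_0)$ for a fixed interior point $(x_0,y_0)\in\Omega_N'$ (say $(1,1/2)$, which lies in $\Omega_N'$ for $N$ large), whose degree on $\Omega_N'$ is $1$. The crucial point is that $T$ points outward along $\partial\Omega_N'$. Indeed a direct computation gives the identity
\[
\scal{T,\nabla u}=u_{yy}u_x^2-2u_{xy}u_xu_y+u_{xx}u_y^2=-\curv\abs{\nabla u}^3 .
\]
On the arc $u=0$, so $\nabla u=u_\nu\nu$ with $u_\nu<0$ by Hopf, and $\curv\ge0$ by convexity of $\Omega_N$; therefore $u_\nu\,\scal{T,\nu}=-\curv\abs{\nabla u}^3\le0$, which yields $\scal{T,\nu}\ge0$. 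On the segment $\{x=1/2\}$ the outer normal is $\nu=(-1,0)$, and the $\mathcal C^2$ convergence of Proposition~\ref{convcompatti} makes the first component of $T$ tend to $-\tfrac{A_0^2\pi^2}{2}<0$ uniformly, so $\scal{T,\nu}>0$ there for $N$ large. Hence $\scal{T,\nu}\ge0$ on all of $\partial\Omega_N'$. Since $\Omega_N'$ is convex and $(x_0,y_0)$ is an interior point, the supporting-hyperplane inequality gives $\scal{q-(x_0,y_0),\nu}>0$ for every $q\in\partial\Omega_N'$; consequently $T(q)$ can never be a negative multiple of $q-(x_0,y_0)$, since such a multiple would force $\scal{T,\nu}<0$. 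Therefore the homotopy $(1-t)T(q)+t\big(q-(x_0,y_0)\big)$ never vanishes on $\partial\Omega_N'$ (at $t=0$ it equals $T\ne\orig$, at $t=1$ it equals $q-(x_0,y_0)\ne\orig$, and for $t\in(0,1)$ vanishing would produce exactly the forbidden negative multiple). Homotopy invariance then gives $\deg(\Omega_N',T,\orig)=1$, and Corollary~\ref{cor1} yields the conclusion.

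The step I expect to be the main obstacle is precisely controlling $T$ on the \emph{far} part of the boundary $\partial\Omega_N\cap\{x\gg1\}$. Since $\abs{P_N}\to\infty$, the unique critical point escapes every fixed compact set, and the limit profile $A_0x\sin(\pi y)$ has no interior critical point at all; thus the degree cannot be localized where the compact convergence of Proposition~\ref{convcompatti} is available, and a naive passage to the limit would count zero. What rescues the computation is that the outward sign $\scal{T,\nu}\ge0$ on the arc is a purely qualitative consequence of convexity ($\curv\ge0$) and Hopf's lemma, valid along the whole arc with no asymptotic input; the delicate $\mathcal C^2$ convergence is invoked only on the compact segment $\{x=1/2\}$, where it is legitimately available.
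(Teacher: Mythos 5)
Your proposal is correct and takes essentially the same route as the paper: the same straight-line homotopy between $T$ and $I-p_0$, with admissibility on the far arc coming from Hopf's lemma plus convexity of $\partial\Omega_N$ (no asymptotic input) and on the segment $\{x=1/2\}$ from the compact convergence estimate~\ref{prop2:eq1b}, concluding via Corollary~\ref{cor1}. Your outward-pointing inequality $\scal{T,\nu}\ge0$ is precisely the paper's relation~\ref{b3b} rewritten (pairing the homotopy equation with $\nabla u=u_\nu\nu$ and using star-shapedness with respect to $p_0$), so the two verifications coincide in substance.
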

\begin{proof}
We want to apply Corollary~\ref{cor1}. First of all note that $T\not=\orig$ on $\partial\Omega_N'$. Indeed, in $\partial\Omega_N'\cap\partial\Omega_N$, $T=\orig$ implies
\begin{align*}
-\abs{\nabla u}^3\curv&=u_{yy}u_{x}^2-2u_{xy}u_{x}u_{y}+u_{xx}u_{y}^2\\
&=u_x\left(u_{yy}u_{x}-u_{xy}u_{y}\right)+u_y\left(u_{xx}u_{xy}-u_{xy}u_{x}\right)=0,
\end{align*}
a contradiction with the Hopf boundary lemma and the assumption $\curv>0$ on $\partial\Omega_N$.\\
On the other hand, for $p=(1/2,y)$, using~\ref{GJ:conv1b}, we have
\begin{align}
u_{x}u_{yy}-u_{y}u_{xy}&=\partial_{x}\left(A_0x\sin(\pi y)\right)\partial_{yy}\left(A_0x\sin(\pi y)\right)+\nonumber\\
&\quad-\partial_{y}\left(A_0x\sin(\pi y)\right)\partial_{xy}\left(A_0x\sin(\pi y)\right)+o(1)\nonumber\\
\label{prop2:eq1b}	&=-\frac{A_0^2\pi^2}{2}(1+o(1)),
\end{align}
and then $T\not=\orig$.

So the degree of $T$ is well defined and if for $p_0:=\left(1,\frac12\right)$ the homotopy
\begin{align*}
H:[0,1]\times\overline{\Omega'}_{N}&\to\R^{2}\\
(t,q)&\mapsto tT(q)+(1-t)(q-p_0),
\end{align*}
is admissible then we deduce
\[
\deg(\Omega_{N}',T,\orig)=\deg(\Omega_{N}',I-p_0,\orig)=1,
\]
Assume, by contradiction, that the homotopy $H$ is not admissible. Hence, there exist $\tau\in[0,1]$ and $q:=(x_q,y_q)\in\partial \Omega_{N}'$ such that $H(\tau,q)=\orig$, i.e.
\begin{equation}
\label{sez3:thm1:eq5b}
\begin{cases}
\tau(u_{yy}(q)u_{x}(q)-u_{xy}(q)u_{y}(q))=(\tau-1)(x_{q}-1)\\
\tau(u_{xx}(q)u_{y}(q)-u_{xy}(q)u_{x}(q))=(\tau-1)(y_{q}-1/2).
\end{cases}
\end{equation}
Then, multiplying the first equation by $u_{x}({q})$, the second by $u_{y}({q})$ and summing we get
\begin{equation}
\label{b3b}
-\tau\curv(q)\abs{\nabla u(q)}^3=(\tau-1)[(x_{q}-1)u_{x}(q)+(y_{q}-1/2)u_{y}(q)].
\end{equation}
We want to show that~\ref{b3b} leads to a contradiction. First assume that $q\in\partial\Omega_N'\cap\partial\Omega_N$.

For $(x,y)\in\partial\Omega_N'\cap\partial\Omega_N$ denote by $\nu=(\nu_{x},\nu_{y})$  the unit normal exterior vector at $q$ (consider $\nu$ as the exterior normal to $\partial\Omega_N$ if $x_q=1/2$). Using that $\Omega_{N}'$ is star-shaped with respect to $p_0$ and the Hopf boundary lemma we have
\[
(x_q-1)u_{x}(q)+(y_q-1/2)u_{y}(q)=u_{\nu}(q)[(x_q-1)\nu_{x}+(y_q-1/2)\nu_{y}]<0.
\]
Since $\curv>0$ on $\partial\Omega_N'\cap\partial\Omega_N$, from~\ref{b3b} we get a contradiction. 
It follows that $q\not\in\partial\Omega_N'\cap\partial\Omega_N$ and then $q=(1/2,y_q)$.
 From~\ref{prop2:eq1b} and the first line of~\ref{sez3:thm1:eq5b} we get
\[
-\frac{A_0^2\pi^2}{2}\tau(1+o(1))=(\tau-1)(1/2-1)=\frac{1-\tau}{2},
\]
again a contradiction.

So $\deg(\Omega_{N}',T,\orig)=1$  and by Corollary~\ref{cor1} we get that there exists exactly one critical point in $\Omega_{N}'$: a maximum with negative definite Hessian.
\end{proof}

Similarly we can prove the following proposition.

\begin{prop}
\label{prop2b}
For $N$ big enough, $u_N$ has exactly one critical point in the set $\{(x,y)\in\Omega_N|x<-1/2\}$. In particular, it is a nondegenerate minimum point.
\end{prop}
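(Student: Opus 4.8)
The plan is to mirror the proof of Proposition~\ref{prop1b} by symmetry, transferring the entire argument from the region $\Omega_N' = \Omega_N \cap \{x>1/2\}$ to the region $\Omega_N'' := \Omega_N \cap \{x<-1/2\}$, and then identifying the unique critical point there as a nondegenerate \emph{minimum} rather than a maximum. First I would observe that the limiting profile $u_\infty(x,y)=A_0 x\sin(\pi y)$ given by Proposition~\ref{convcompatti} is odd in $x$, so on the left side $u_N$ is negative (consistent with the normalization $u_N(x_0-1,y_0)<0$ fixed in Section~\ref{sez2}). The vector field $T$ and the analogues of Propositions~\ref{propd1} and~\ref{prop3b}, Lemmas~\ref{lemma1} and~\ref{lemma2}, and Corollary~\ref{cor1} all hold verbatim on $\Omega_N''$, since those statements only use convexity, positivity of the boundary curvature $\curv$, and the $\mathcal C^2$ convergence~\ref{GJ:conv1b}, all of which are symmetric under $x\mapsto -x$.

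The core of the proof is to rerun the degree computation. I would set the pivot point $p_0':=(-1,1/2)\in\Omega_N''$ and consider the homotopy $H(t,q)=tT(q)+(1-t)(q-p_0')$. The region $\Omega_N''$ is star-shaped with respect to $p_0'$, so the two cases in the proof of Proposition~\ref{prop1b} go through unchanged: on the part of the boundary lying on $\partial\Omega_N$, a zero of $T$ would force $\curv\,|\nabla u|^3=0$, contradicting the Hopf lemma and $\curv>0$; on the segment $\{x=-1/2\}$, the convergence~\ref{GJ:conv1b} again gives
\[
u_x u_{yy}-u_y u_{xy}=-\frac{A_0^2\pi^2}{2}(1+o(1))\ne 0,
\]
so $T\ne\orig$ there and the star-shapedness estimate $(x_q+1)u_x(q)+(y_q-1/2)u_y(q)=u_\nu(q)[(x_q+1)\nu_x+(y_q-1/2)\nu_y]<0$ rules out a zero of $H$ on $\partial\Omega_N''\cap\partial\Omega_N$. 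Hence $\deg(\Omega_N'',T,\orig)=\deg(\Omega_N'',I-p_0',\orig)=1$, and Corollary~\ref{cor1} yields exactly one critical point.

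It remains to determine that this critical point is a nondegenerate minimum. Here I would use that the unique critical point $Q_N$ lies on a compact set (in fact $|Q_N|$ stays within the region where~\ref{GJ:conv1b} controls the Hessian, since the argument of Corollary~\ref{cor1} already forces nondegeneracy). By the $\mathcal C^2$ convergence and the oddness of $u_\infty$, at any critical point $p$ with $x_p<-1/2$ one has $\partial_{xx}u_N(p)=A_0\partial_{xx}(x\sin\pi y)+o(1)=o(1)$ and $\partial_{yy}u_N(p)=-A_0\pi^2 x_p\sin(\pi y_p)+o(1)$, which is strictly positive because $x_p<0$ and $\sin(\pi y_p)>0$; combined with nondegeneracy this forces the Hessian to be positive definite, i.e.\ a minimum. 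Alternatively, and more cleanly, I note that since $u_N<0$ on $\{x<-1/2\}$ and $u_N=0$ on the nodal line and part of $\partial\Omega_N$, the unique interior critical point of a negative function attaining its infimum must be that minimum. The only delicate point, which mirrors the right-hand case, is verifying that the analogue of Corollary~\ref{cor1} indeed produces a \emph{minimum}: one applies the corollary to $-u_N$ (which is positive on $\Omega_N''$) so that the conclusion ``maximum with negative definite Hessian'' for $-u_N$ translates into ``minimum with positive definite Hessian'' for $u_N$. I expect no real obstacle beyond carefully tracking this sign change, since every geometric and analytic ingredient is symmetric in $x$.
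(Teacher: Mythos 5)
Your overall strategy is exactly the paper's: the paper gives no separate proof of this proposition (it simply says ``similarly''), and the intended argument is the mirror of Proposition~\ref{prop1b}, which is what you run. However, two concrete points in your write-up are wrong as stated. First, the sign in your key display is incorrect: since $u_xu_{yy}-u_yu_{xy}=-A_0^2\pi^2x\,(1+o(1))$ by~\ref{GJ:conv1b}, on the segment $\{x=-1/2\}$ one has $u_xu_{yy}-u_yu_{xy}=+\frac{A_0^2\pi^2}{2}(1+o(1))$, not $-\frac{A_0^2\pi^2}{2}(1+o(1))$. This is not cosmetic: with your sign, a zero of the homotopy at $q=(-1/2,y_q)$ would give $-\tau\frac{A_0^2\pi^2}{2}(1+o(1))=(\tau-1)(x_q+1)=-\frac{1-\tau}{2}$, where \emph{both} sides are nonpositive, so a solution $\tau\in(0,1)$ is not excluded and the admissibility argument collapses. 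With the correct sign the left-hand side is nonnegative and the right-hand side nonpositive, and the contradiction survives --- the point being that on the left both the sign of $u_xu_{yy}-u_yu_{xy}$ and the sign of $x_q-x_{p_0'}$ flip simultaneously relative to the right-hand case. Second, your first argument for identifying the critical point as a minimum is based on the claim that $Q_N$ ``stays within the region where~\ref{GJ:conv1b} controls the Hessian''; this is false. The limit $u_\infty=A_0x\sin(\pi y)$ has no interior critical points ($\nabla u_\infty=\orig$ forces $\sin(\pi y)=0$), so the critical point must leave every compact set --- indeed the paper proves $|Q_N|\to+\infty$ in Theorem~\ref{thm1b}. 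Hence you cannot evaluate $\partial_{yy}u_N(Q_N)$ via the local convergence, and the computation $\partial_{yy}u_N(p)=-A_0\pi^2x_p\sin(\pi y_p)+o(1)>0$ has no justification at $Q_N$.

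Fortunately your fallback is the right fix and makes both issues moot: apply the machinery to $-u_N$, which is positive on $\{x<-1/2\}$. Note that $T$ is quadratic in the second-order jet of $u$, so $T(-u)=T(u)$, and the degree computation is unchanged; Corollary~\ref{cor1} applied to $-u_N$ then yields exactly one critical point, a maximum of $-u_N$ with negative definite Hessian, i.e.\ a nondegenerate minimum of $u_N$. Cleaner still is to set $w_N(x,y):=-u_N(-x,y)$ on the reflected domain: the reflected domain satisfies the same hypotheses, $w_N$ has limit profile $A_0x\sin(\pi y)$, and Proposition~\ref{prop1b} applies verbatim, so no sign needs to be tracked by hand at all.
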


Finally the proof of Theorem~\ref{thm1b} easily follows.

\begin{proof}[Proof of Theorem~\ref{thm1b}]
The proof follows from Remark~\ref{rmk1b}, Proposition~\ref{prop1b} and Proposition~\ref{prop2b}. Observe that by the local convergence of $u_N$ to $u_\infty(x,y)=A_0x\sin(\pi y)$ we get that  $|P_N|,|Q_N|\to+\infty$.
\end{proof}

\section{Convex perturbations of rectangles: proof of Theorem~\ref{thm1}}
\label{sez4}

We start recalling the asymptotic expansion of $u_{N,m}$ given in~\cite{gj4}.

\begin{thm}[{\cite[Theorem 1]{gj4}}]
\label{thmGJPacific}
There is a number $a:=a(\phi)\in[0,\max\phi]$ such that for each $m\in\N$ the $m$-th Dirichlet eigenvalue of $\mathcal R_N$ (see \ref{rett}) satisfies
\[
\lambda_{m,N}=\pi^2+\frac{m^2\pi^2}{(N+a(\phi))^2}+O(N^{-5}),\quad N\to\infty.
\]
In particular, the eigenvalues $\lambda_{1,N},\dots,\lambda_{m,N}$ of $\mathcal R_N$ are simple for $N$ sufficiently large. The suitably rescaled eigenfunction $u_{m,N}$ satisfies, for all multiindices $\alpha$,
\begin{equation}
\label{GJ:conv1}
\sup_{\substack{x>3\log N\\0<y<1}}\left| D^\alpha\left(u_{m,N}(x,y)-v_m(x,y)\right)\right|=O(N^{-3}),
\end{equation}
where
\[
v_m(x,y):=\sin\left(m\pi\frac{x+a(\phi)}{N+a(\phi)}\right)\sin\left(\pi y\right),
\]
and 
\[
\sup_{\substack{x\le3\log N\\0<y<1}}\left| u_{m,N}(x,y)\right|=O(N^{-1}\log N).
\]
\end{thm}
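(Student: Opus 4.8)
The plan is to reduce the two–dimensional eigenvalue problem on the long domain $\mathcal R_N$ to an effective one–dimensional problem in the longitudinal variable $x$, following the adiabatic (separation of variables) scheme of Grieser and Jerison. The transverse cross–section of $\mathcal R_N$ at abscissa $x$ is the full interval $(0,1)$ for every $x\ge 0$, since $-\phi(y)\le 0$; only the bounded ``cap'' region $-\max\phi<x<0$ feels the concave boundary $x=-\phi(y)$. The transverse ground state is $\sin(\pi y)$, with eigenvalue $\pi^2$ separated from the next transverse eigenvalue $4\pi^2$ by a gap of order one. First I would project the eigenfunction onto this mode, writing for $x\ge0$
\[
u_{m,N}(x,y)=v(x)\sin(\pi y)+r(x,y),
\]
where $r$ collects the contributions of the transverse modes $\sin(j\pi y)$, $j\ge2$. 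Each such mode satisfies a longitudinal equation with positive ``mass'' $j^2\pi^2-\la_{m,N}\ge 3\pi^2+o(1)$ and is therefore damped at an exponential rate $\ge\pi\sqrt{j^2-1}$; thus $r$ is exponentially small in the bulk and is genuinely excited only near the cap.

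In the bulk $x\ge0$ the profile $v$ solves, to exponential accuracy, the free equation $v''+\mu v=0$ with $\mu:=\la_{m,N}-\pi^2>0$, together with the Dirichlet condition $v(N)=0$ at the right wall; hence $v(x)\propto\sin\big(\sqrt\mu\,(N-x)\big)$. The left condition is produced by matching to the cap. Since $\mu=O(N^{-2})$ is tiny, the cap problem is, to leading order, the \emph{threshold} problem $-\Delta w=\pi^2 w$ on the semi–infinite domain $\{-\phi(y)<x,\ 0<y<1\}$ with zero boundary data. Expanding $w$ in transverse modes shows that its bounded solutions grow at most linearly: the $j=1$ component is affine, $c_1x+d_1$, while all components $j\ge2$ decay exponentially, so the unique (up to scale) linearly growing solution behaves like $c_1\big(x+a(\phi)\big)\sin(\pi y)$ as $x\to+\infty$, with $a(\phi):=d_1/c_1$. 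This zero–energy resonance defines $a(\phi)$ as the associated phase shift; a comparison with the pure rectangle ($\phi\equiv0$, $a=0$) and with the full extension gives $a(\phi)\in[0,\max\phi]$. It imposes on $v$ the effective condition $v(-a(\phi))=0$.

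Imposing both $v(-a(\phi))=0$ and $v(N)=0$ on $v(x)=\sin\big(\sqrt\mu\,(N-x)\big)$ yields the quantisation $\sqrt\mu\,(N+a(\phi))=m\pi+(\text{error})$, whence
\[
\la_{m,N}=\pi^2+\frac{m^2\pi^2}{(N+a(\phi))^2}+O(N^{-5}).
\]
The error analysis is the technical core: one must track (i) the $\mu$–dependence of the cap solution, which perturbs $a(\phi)$ by $O(\mu)=O(N^{-2})$ but, after quantisation, feeds into $\la_{m,N}$ only at order $N^{-5}$; (ii) the exponentially small leakage of the higher transverse modes; and (iii) the mild singularities of the Lipschitz concave boundary, which demand interior and boundary elliptic estimates uniform as the geometry degenerates. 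The rigorous passage from the quasimode to a genuine eigenpair is variational: $v_m(x,y)=\sin\big(m\pi\frac{x+a(\phi)}{N+a(\phi)}\big)\sin(\pi y)$ is an approximate eigenfunction with residual $O(N^{-5})$, and because the candidate eigenvalues $\frac{j^2\pi^2}{(N+a(\phi))^2}$, $1\le j\le m$, are separated by gaps of order $N^{-2}\gg N^{-5}$, each quasimode locks onto exactly one eigenvalue, yielding both the expansion and the simplicity of $\la_{1,N},\dots,\la_{m,N}$.

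Finally, the eigenfunction estimates follow from the same decomposition. For $x>3\log N$ two discrepancies must be controlled: the remainder $r$, which decays like $e^{-\pi\sqrt3\,x}$ and is $\ll N^{-3}$ once $x\gtrsim\log N$; and the longitudinal phase error, since $\sqrt\mu=\frac{m\pi}{N+a(\phi)}\big(1+O(N^{-3})\big)$ propagates through $\sin\big(\sqrt\mu\,(N-x)\big)$ to a discrepancy of size $O(N^{-3})$ between $v$ and the profile of $v_m$. Together these give \ref{GJ:conv1}, and elliptic regularity upgrades the closeness to all derivatives $D^\alpha$. For $x\le3\log N$ the bound $O(N^{-1}\log N)$ comes from the amplitude of the profile near the cap: since $v_m(x,y)\approx\frac{m\pi(x+a(\phi))}{N}\sin(\pi y)$ there, one has $\abs{u_{m,N}(x,y)}\lesssim N^{-1}\,(3\log N)=O(N^{-1}\log N)$. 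I expect the main obstacle to be the precise definition and fine control of $a(\phi)$ through the threshold resonance, together with the uniform elliptic estimates near the non–smooth cap boundary needed to push the eigenvalue error down to $O(N^{-5})$.
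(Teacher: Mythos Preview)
The paper does not prove this statement at all: Theorem~\ref{thmGJPacific} is quoted verbatim from \cite[Theorem~1]{gj4} and used as a black box in Section~\ref{sez4}. There is therefore no ``paper's own proof'' to compare your proposal against.

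That said, what you have written is a faithful high-level outline of the Grieser--Jerison adiabatic scheme: projection onto the lowest transverse mode, exponential decay of the higher modes, the threshold resonance on the semi-infinite strip defining the phase shift $a(\phi)$, quantisation from the two effective Dirichlet conditions, and a quasimode argument to pass from approximate to true eigenpairs. As a roadmap it is accurate. As a proof it is not: the steps you flag yourself as ``the technical core'' (uniform elliptic estimates near the Lipschitz cap, the precise bookkeeping that pushes the eigenvalue error to $O(N^{-5})$, and the rigorous quasimode-to-eigenfunction passage with control of all $D^\alpha$) are exactly where the work lies in \cite{gj4}, and none of them is carried out here. If the intent was to reproduce the proof rather than to summarise it, those parts would need to be filled in; if the intent was only to recall the result, a citation suffices, which is what the present paper does.
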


We prove Theorem~\ref{thm1} for $m=2$, the general case is a simple generalization as will be clear from the proof, see also Remark~\ref{rmk:alessandrini}. We write $u_N=u_{2,N}$ and $v=v_2$ for brevity.

For future convenience let us set
\begin{equation*}
\label{def_xN}
\begin{split}
x_N&:=\frac{1}{2}(N+a)-a,\\
x_N^+&:=\frac{1}{4}(N+a)-a,\\
x_N^-&:=\frac{3}{4}(N+a)-a,\\
 x_N'&:=\frac{1}{12}(N+a)-a.
\end{split}
\end{equation*}
\begin{prop}
\label{prop1}
For $N$ big enough, the eigenfunction $u_N$ has exactly one nondegenerate maximum point and one nondegenerate minimum point in the set $\mathcal R_N\cap\{x>3\log N\}$. 
\end{prop}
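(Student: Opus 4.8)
I want to prove Proposition~\ref{prop1}: for $N$ large, $u_N = u_{2,N}$ has exactly one nondegenerate maximum and one nondegenerate minimum in $\mathcal{R}_N \cap \{x > 3\log N\}$. The key tool is the asymptotic expansion from Theorem~\ref{thmGJPacific}: on the region $x > 3\log N$ we have $C^2$ closeness (all derivatives, by taking $|\alpha|\le 2$) of $u_N$ to $v(x,y) = \sin\!\big(2\pi\frac{x+a}{N+a}\big)\sin(\pi y)$ with error $O(N^{-3})$. Let me first understand the critical points of the limiting profile $v$. Computing $v_x = \frac{2\pi}{N+a}\cos\!\big(2\pi\frac{x+a}{N+a}\big)\sin(\pi y)$ and $v_y = \pi\sin\!\big(2\pi\frac{x+a}{N+a}\big)\cos(\pi y)$, the critical points of $v$ in the relevant range occur where $\cos\!\big(2\pi\frac{x+a}{N+a}\big)=0$ and $\cos(\pi y)=0$, i.e. at $y=1/2$ and $\frac{x+a}{N+a}\in\{1/4,3/4\}$, which are exactly the points $x_N^+$ and $x_N^-$ defined above (with $y=1/2$). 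The first of these is a minimum of $v$ (where $\sin = +1$ near the left, giving $v<0$ after the sign there) and the second a maximum — in any case $v$ has exactly one maximum and one minimum in the open strip portion $3\log N < x < N$, $0<y<1$, and both are nondegenerate, since the Hessian of $v$ at these points is diagonal with entries of size $O(N^{-2})$ and $O(1)$, hence of rank $2$ with a definite sign pattern.

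\medskip

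\textbf{Localizing the critical points.} The plan is to run a degree/homotopy argument entirely in the region $x > 3\log N$, but since the $C^2$ estimate gives $|D^\alpha(u_N - v)| = O(N^{-3})$ while $v_x$ itself is only of size $O(N^{-1})$, I cannot directly conclude $\nabla u_N \ne 0$ away from the critical points of $v$ — the rescaling matters. I would instead rescale the $x$-variable: set $s = \frac{x+a}{N+a}$ so that in the $s$ variable $v$ and all its $s$-derivatives are $O(1)$, and the $O(N^{-3})$ error, after the chain-rule rescaling, still beats the $O(1)$ size of $\partial_s v$ and its derivatives. Concretely, I would cover the strip by two boxes, one around $x_N^+$ and one around $x_N^-$ (say balls of fixed radius in the $s$-variable, staying inside $3\log N < x < N$ for $N$ large), and on each box show via the rescaled $C^1$ estimate that $\nabla u_N$ has a nondegenerate zero near the corresponding critical point of $v$ and no other zero. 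Outside these boxes, $|\nabla v|$ is bounded below in the rescaled coordinates, so $\nabla u_N \ne 0$ there as well, which confines all critical points of $u_N$ to the two boxes.

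\medskip

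\textbf{Counting and nondegeneracy.} Inside each box I would invoke the same machinery as in Section~\ref{sez3}, or more simply a quantitative inverse function / degree argument: since $v$ has a single nondegenerate critical point in the box with $\det\hess v$ bounded away from $0$ (in rescaled coordinates), and $u_N \to v$ in $C^2$ (rescaled), the map $\nabla u_N$ is a small $C^1$-perturbation of $\nabla v$, so it has exactly one zero in the box and that zero is nondegenerate, with the same signature as the corresponding critical point of $v$ — hence one nondegenerate maximum near $x_N^-$ and one nondegenerate minimum near $x_N^+$. The counting can be phrased through $\deg(\nabla u_N, \text{box}, \orig) = \deg(\nabla v, \text{box}, \orig) = \pm 1$ via an admissible homotopy (the boundary non-vanishing is exactly the outside-the-box estimate).

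\medskip

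\textbf{Main obstacle.} The real subtlety, and the point I would be most careful about, is the mismatch of scales: the natural error $O(N^{-3})$ must be compared against gradients and Hessian entries that are themselves small in the $x$-direction (of order $N^{-1}$ and $N^{-2}$), so a naive $C^2$-closeness argument is insufficient and the rescaling $s=\frac{x+a}{N+a}$ is essential to make $\nabla u_N$ a genuinely small perturbation of $\nabla v$. Once the problem is set in the $s$-variable the error becomes $O(N^{-3})$ against $O(1)$ quantities and everything goes through; verifying that this rescaling is compatible with the region $x>3\log N$ (i.e. that $s$ stays bounded away from $0$ where $v$ would otherwise degenerate) and with the $O(N^{-1}\log N)$ control on the complementary region is the crux of the argument.
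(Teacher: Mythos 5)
There is a genuine gap, and it sits exactly where you placed your confidence: the claim that ``outside these boxes, $|\nabla v|$ is bounded below in the rescaled coordinates'' is false. In the rescaled variable $s=\frac{x+a}{N+a}$ one has $\tilde v_s=2\pi\cos(2\pi s)\sin(\pi y)$ and $\tilde v_y=\pi\sin(2\pi s)\cos(\pi y)$, and both components vanish simultaneously not only at the two interior critical points $(1/4,1/2)$, $(3/4,1/2)$, but also in the limit at the four boundary points $(1/2,0)$, $(1/2,1)$, $(1,0)$, $(1,1)$ --- in the original coordinates, $(x_N,0)$, $(x_N,1)$, $(N,0)$, $(N,1)$, i.e.\ where the nodal line and the right edge meet $\{y=0,1\}$. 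Along the segment $s=1/2$, for instance, $\tilde v_y\equiv0$ and $\tilde v_s\to0$ as $y\to0$, so the infimum of $|\nabla\tilde v|$ outside your two boxes is $0$, and the $O(N^{-3})$ error cannot exclude critical points of $u_N$ hiding near these four points. Your degree argument in the two boxes is fine (it is essentially the paper's homotopy $t\nabla u_N+(1-t)\nabla v$ on fixed balls around $(x_N^\pm,1/2)$; the rescaling is a clean way to organize it, though not strictly needed, since near those points the relevant Hessian entries are of size $N^{-2}$ and the boundary gradient lower bound is of size $rN^{-2}$, both beating $O(N^{-3})$), but it only localizes and counts the critical points \emph{away} from the four degenerate boundary spots; it does not rule out extra interior critical points there, so ``exactly one maximum and one minimum'' is not established.

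The paper devotes the bulk of its proof precisely to these excluded cases (its cases~\ref{p3}--\ref{p6}), and the exclusion requires new input, not just the asymptotics of $v$. The argument there: any critical point $p$ of $u_N$ near, say, $(x_N,0)$ would satisfy $\partial_{xy}u_N(p)=-\frac{2\pi^2}{N+a}(1+o(1))$ with $\partial_{xx}u_N(p)=o(N^{-2})$ and $\partial_{yy}u_N(p)=o(1)$, hence $\det\hess u_N(p)<0$, a nondegenerate saddle of index $-1$. By Alessandrini's theorem (reference~[a] in the paper, using convexity of $\mathcal R_N$) the nodal line meets $\partial\mathcal R_N$ transversally at a point $p_N=(\tilde x_N,0)$, which is itself a boundary critical point; excising a small ball so as to separate $p$ from this boundary degeneracy, one gets an annular region $\omega_N$ in which every zero of $\nabla u_N$ has index $-1$ and which contains $p$, whence $\deg(\omega_N,\nabla u_N,\orig)\le-1$, contradicting $\deg(\omega_N,\nabla v,\orig)=0$ obtained by homotopy. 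Something of this nature --- in particular the transversality statement, which is a nontrivial theorem about nodal lines and not a consequence of the $O(N^{-3})$ expansion alone --- is needed to close your argument; as written, your proposal silently assumes the conclusion in the region where $\nabla v$ degenerates. (A minor point: you also have the extrema swapped --- since $v(x_N^+,1/2)=\sin(\pi/2)=1$, the maximum is near $x_N^+$ and the minimum near $x_N^-$ --- but this does not affect the count.)
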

\begin{proof}
From~\ref{GJ:conv1} easily follows that $u_N$ has a maximum point close to $(x_N^+,1/2)$ and a minimum point close to $(x_N^-,1/2)$. To show that they are the only ones and are nondegenerate, let $p:=(x_p,y_p)\in \mathcal R_N\cap\{x>3\log N\}$ be a critical point for $u_N$.

Then~\ref{GJ:conv1} implies that there exist a continuous and decreasing function $h:(0,+\infty)\to(0,+\infty)$ such that $\lim_{N\to+\infty}h(N)=0$ and one of the following occurs
\begin{align}
\label{p1}p&\in B_{h(N)}(x_N^+,1/2),\\
\label{p2}p&\in B_{h(N)}(x_N^-,1/2),\\
\label{p3}p&\in B_{h(N)}(x_N,0)\cap\Omega_N,\\
\label{p4}p&\in B_{h(N)}(x_N,1)\cap\Omega_N,\\
\label{p5}p&\in B_{h(N)}(N,0)\cap\Omega_N,\\
\label{p6}p&\in B_{h(N)}(N,1)\cap\Omega_N.
\end{align}
Assume~\ref{p1}, then from~\ref{GJ:conv1} one has
\begin{align*}
\partial_{xx}u_N(p)&=\partial_{xx}v(p)+O(N^{-3})\\
&=-\frac{4\pi^2}{(N+a)^2}\sin(\pi/2)\sin(\pi/2)(1+o(1))=-\frac{4\pi^2}{(N+a)^2}(1+o(1))
\end{align*}
and similarly
\[
\partial_{xy}u_N(p)=o(N^{-1})\quad\text{and}\quad \partial_{yy}u_N(p)=-\pi^2(1+o(1)).
\]
Hence $p$ is a nondegenerate maximum point. Moreover, we can find $r>0$ independent from $N$ such that the following homotopy $H:[0,1]\times\overline{B_{r}(x_N^+,1/2)}\ \to\R^{2}$
\[
H(t,q)=t\nabla u_N(q)+(1-t)\nabla v(q),
\]
is admissible  for $N$ big enough. Then
\[
\deg(B_{r}(x_N^+,1/2),\nabla u_N,\orig)=\deg(B_{r}(x_N^+,1/2),\nabla v,\orig)=1,
\]
shows that there is exactly one critical point satisfying~\ref{p1}. If we assume~\ref{p2}, by similar computations, we obtain the existence of exactly one nondegenerate minimum point in $B_{h(N)}(x_N^-,1/2)$.

Now assume~\ref{p3} i.e. $p\in B_{h(N)}(x_N,0)\cap \mathcal R_N$. Then the same computation as before tell us that $p$ is a nondegenerate saddle point, indeed one has
\begin{equation}
\label{saddle}
\partial_{xx}u_N(p)=o(N^{-2}),\!\quad \partial_{xy}u_N(p)=-\frac{2\pi^2}{N+a}(1+o(1)),\!\quad \partial_{yy}u_N(p)=o(1).
\end{equation}
Now, if $\Lambda_N:=\overline{\{(x,y)\in \mathcal R_N|u_N(x,y)=0\}}$ is the nodal line of $u_N$, let $p_N:=(\tilde x_N,0)\in\partial \mathcal R_N\cap\Lambda_N$. Since $\mathcal R_N$ is convex we know from~\cite[Theorem 1]{a} that $\Lambda_N$ intersects $\partial \mathcal R_N$ transversally at $p_N$. In particular $\partial_yu_N(p_N)=0$ and then $p_N$ is a critical point for $u$ and~\ref{saddle} shows that it is a nondegenerate saddle point. Since both $p$ and $p_N$ are nondegenerate we can find $g(N)\in(0,h(N))$ such that $p\in B_{h(N)}(x_N,0)\setminus\overline{B_{g(N)}(x_N,0)}$, and for $r>0$ suitably small and $N$ big enough, since in every critical point in $\omega_N:=B_{r}(x_N,0)\setminus\overline{B_{g(N)}(x_N,0)}\cap\Omega_N$ one has
\[
\det\hess u_N=-\left(\frac{2\pi^2}{N+a}\right)^2(1+o(1))<0,
\]
thanks to~\ref{saddle}, and since at least $p$ belongs to $\omega_N$ it follows $\deg(\omega_N,\nabla u_N,\orig)\le-1$ and then
\[
-1\ge\deg(\omega_N,\nabla u_N,\orig)=\deg(\omega_N,\nabla v,\orig)=0,
\]
a contradiction.

The same argument shows that~\ref{p4},~\ref{p5} and~\ref{p6} cannot occur and the proof is complete.
\end{proof}

\begin{rmk}
\label{rmk:alessandrini}
In case $m>2$,~\cite[Theorem 1]{a} still ensures that the nodal line intersects the boundary $\partial \mathcal R_N$ transversally at $2m$ different points 
\end{rmk}

\begin{prop}
\label{prop2}
For $N$ big enough, $u_N$ has no critical point in the set
\[
\mathcal R_N':=\Set{(x,y)\in \mathcal R_N|x< x_N'}.
\]
\end{prop}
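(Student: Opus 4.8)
The plan is to show that $\nabla u_N$ never vanishes in $\mathcal R_N'$, in fact that $u_N$ is strictly increasing in the $x$--direction there. Write $s:=\frac{x+a}{N+a}$, so that $\mathcal R_N'=\{s<\tfrac1{12}\}$ and recall $v(x,y)=\sin(2\pi s)\sin(\pi y)$. I would first split $\mathcal R_N'$ along the line $x=3\log N$ (note $3\log N<x_N'\approx\frac N{12}$ for $N$ large) into the bulk $\mathcal B:=\{3\log N<x<x_N'\}\cap\mathcal R_N$ and the thin left region $\mathcal A:=\{x\le 3\log N\}\cap\mathcal R_N$.

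On $\mathcal B$ the estimate~\ref{GJ:conv1} of Theorem~\ref{thmGJPacific} applies and gives $\nabla u_N=\nabla v+O(N^{-3})$. Since on $\mathcal B$ one has $2\pi s\in(0,\pi/6)$, so $\cos(2\pi s)\ge\cos(\pi/6)=\tfrac{\sqrt3}2$, a direct computation of $\partial_xv=\frac{2\pi}{N+a}\cos(2\pi s)\sin(\pi y)$ and $\partial_yv=\pi\sin(2\pi s)\cos(\pi y)$ shows $|\nabla v|\ge c/N$ uniformly on $\mathcal B$: where $\sin(\pi y)\ge\tfrac12$ the $x$--component is $\gtrsim 1/N$, while where $\sin(\pi y)<\tfrac12$ (so $|\cos(\pi y)|>\tfrac{\sqrt3}2$) the $y$--component is $\gtrsim\sin(2\pi s)\gtrsim(\log N)/N$ because $s\ge\frac{3\log N+a}{N+a}$. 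As $c/N\gg N^{-3}$, this yields $\nabla u_N\neq\orig$ on $\mathcal B$; moreover $\partial_xu_N>0$ there except possibly in an $O(N^{-2})$--neighbourhood of $y=0,1$.

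The remaining region $\mathcal A$ is the main obstacle, since there Theorem~\ref{thmGJPacific} provides only the $L^\infty$ bound $|u_N|=O(N^{-1}\log N)$, and smallness of $u_N$ (or, after elliptic estimates, of $\nabla u_N$) does not by itself exclude critical points. To handle it I would instead prove the stronger statement $\partial_xu_N>0$ on \emph{all} of $\mathcal R_N'$ by the moving plane method in the $x$--direction, exploiting the convexity of $\mathcal R_N$ (the concavity of $\phi$). The inputs from~\cite{gj4} are: $u_N>0$ throughout $\mathcal R_N'$ (the interior nodal line of $u_N$ sits near $x=x_N$, far to the right of $x_N'$, so $\mathcal R_N'$ lies in the positive nodal domain $\Omega^+$), and $\lambda_{2,N}=\pi^2+\frac{4\pi^2}{(N+a)^2}+O(N^{-5})$. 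For $\lambda<x_N'$ set $\Sigma_\lambda:=\{x<\lambda\}\cap\Omega^+$ and $w_\lambda(x,y):=u_N(x,y)-u_N(2\lambda-x,y)$; then $(-\Delta-\lambda_{2,N})w_\lambda=0$ in $\Sigma_\lambda$, with $w_\lambda=0$ on $\{x=\lambda\}$ and $w_\lambda\le0$ on the rest of $\partial\Sigma_\lambda$ (there $u_N=0$ while its reflection is positive, the reflected caps remaining inside $\Omega^+$ because their rightmost extent $2\lambda+\max\phi\ll x_N$).

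The crucial point is that the maximum principle for $-\Delta-\lambda_{2,N}$ is available on $\Sigma_\lambda$ even though $\lambda_{2,N}>\pi^2$: as $\Sigma_\lambda$ is contained in a box of height $\le 1$ and width $W=\lambda+\max\phi$, domain monotonicity of Dirichlet eigenvalues gives $\lambda_1(\Sigma_\lambda)\ge\pi^2(1+W^{-2})$, which exceeds $\lambda_{2,N}=\pi^2+\frac{4\pi^2}{(N+a)^2}+O(N^{-5})$ exactly when $W<\tfrac12(N+a)$, and this holds comfortably for all $\lambda\le x_N'\approx\frac N{12}$. Hence $w_\lambda\le0$, and the usual continuation in $\lambda$ (started from tiny caps, where the principle is trivial, and run up to $x_N'$, since neither the eigenvalue obstruction nor a loss of containment in $\Omega^+$ occurs before then) together with Hopf's lemma gives $\partial_xu_N(\lambda,y)=\tfrac12\partial_xw_\lambda(\lambda,y)>0$ for every $\lambda\le x_N'$. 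Thus $\partial_xu_N>0$, and a fortiori $\nabla u_N\neq\orig$, on all of $\mathcal R_N'$, which proves the proposition (and incidentally reproves the bulk estimate above). The delicate points to check are that the reflected caps stay in $\Omega^+$ for $\lambda$ up to $x_N'$ and the eigenvalue comparison guaranteeing the maximum principle on these non-thin caps.
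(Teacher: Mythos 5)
Your proposal is correct in substance, but it takes a genuinely different route from the paper. The paper's proof is a direct one-step maximum principle applied to the derivative itself: $w:=\partial_xu_N$ solves the same equation $-\Delta w=\lambda_{2,N}w$ in $\mathcal R_N'$, and one checks $w\ge0$ on all of $\partial\mathcal R_N'$ (on the artificial edge $\{x=x_N'\}$ via the expansion~\ref{GJ:conv1}, which gives $\partial_xu_N(x_N',y)=\frac{2\pi}{N+a}\cos(\pi/6)\sin(\pi y)(1+o(1))$; on the horizontal sides because $u_N$ vanishes identically there; on the left cap by Hopf's lemma and the sign of the normal's $x$-component); since domain monotonicity gives $\lambda_1(\mathcal R_N')>\lambda_{2,N}$, the generalized maximum principle yields $\partial_xu_N>0$ in $\mathcal R_N'$. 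You reach the same conclusion $\partial_xu_N>0$ by moving planes instead, using exactly the same key eigenvalue comparison (caps of width $W\ll(N+a)/2$ have $\lambda_1>\lambda_{2,N}$) but replacing the boundary-sign verification by reflection: on each cap $\Sigma_\lambda$ the function $w_\lambda$ vanishes on $\{x=\lambda\}$ \emph{by construction}, so your argument needs no information from~\ref{GJ:conv1} on the vertical edge at all — what it needs instead is the reflection geometry (the reflected cap stays in the domain by $x$-convexity, since $2\lambda+\max\phi\ll N$) and positivity of $u_N$ somewhat further right, up to $x\approx2x_N'+\max\phi\approx N/6$, rather than just on $\mathcal R_N'$. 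That trade is fair: the paper's argument is shorter and needs the asymptotics only on one line, yours is more self-contained on the artificial boundary but leans harder on the localization of the nodal line near $x_N$ (a point the paper also asserts tersely, so this is a shared, not a new, gap). Your preliminary bulk estimate on $\mathcal B$ is correct but redundant once the moving plane is run, as you note.

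Two small cleanups. First, the ``continuation in $\lambda$ started from tiny caps'' is unnecessary: since $\lambda_1(\Sigma_\lambda)>\lambda_{2,N}$ holds uniformly for every $\lambda\le x_N'$, the generalized maximum principle (test the equation for $w_\lambda^+\in H^1_0(\Sigma_\lambda)$) gives $w_\lambda\le0$ at each fixed $\lambda$ directly, with no sliding argument. Second, your Hopf step should be justified for the operator with the ``wrong-sign'' zero-order term: this is harmless because once $w_\lambda\le0$ one has $\Delta w_\lambda=-\lambda_{2,N}w_\lambda\ge0$, so $w_\lambda$ is subharmonic and the classical Hopf lemma and strong maximum principle apply at the flat segment $\{x=\lambda\}$, giving $\partial_xu_N(\lambda,y)=\tfrac12\partial_xw_\lambda(\lambda,y)>0$ as you claim (after excluding $w_\lambda\equiv0$, which fails because the reflected left boundary lands in the interior where $u_N>0$).
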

\begin{proof}
Let us point out that, from the estimate~\ref{GJ:conv1} and since $x_N'<x_N$, it follows $u_N>0$ in $\mathcal R_N'$. By the domain monotonicity for Dirichlet eigenvalues one has $\lambda_1(\mathcal R_N')>\lambda_{2,N}$ and then the operator $-\Delta-\lambda_{2,N}$ satisfies the maximum principle in $\mathcal R_N'$. From~\ref{GJ:conv1} one has for all $y\in(0,1)$
\[
\partial_xu_N(x_N',y)=\frac{2\pi}{N+a}\cos(\pi/6)\sin\left(\pi y\right)(1+o(1))\ge0.
\]
Therefore, $\partial_xu_N\ge0$ on $\partial \mathcal R_N'$ and then the maximum principle gives $\partial_xu_N>0$ on $\mathcal R_N'$.
\end{proof}

\begin{proof}[Proof of Theorem~\ref{thm1}]
The proof is an obvious consequence of Proposition~\ref{prop1} and Proposition~\ref{prop2}.
\end{proof}

\bibliographystyle{alphaabbrv}
\bibliography{NuovaVersione.bib}

\end{document}